%

\documentclass[10pt,reqno]{amsart}
\usepackage[applemac]{inputenc}
\usepackage[english]{babel}
\usepackage{caption}
\usepackage{amsmath}
\usepackage{bm}
\usepackage{bbm}
\usepackage{xcolor}
\usepackage{graphicx}
\usepackage{amssymb}

\usepackage{color}
\usepackage{color}\newcommand{\blue}{\color{blue}}

%
%
%
 \newtheorem{thm}{Theorem}[section]
 
 \newtheorem{lem}[thm]{Lemma}
 
 \newtheorem{rem}{Remark}
  \newtheorem*{rem*}{Remark}
 \numberwithin{equation}{section}
 
\newcommand\eps{\varepsilon}
\newcommand\vb{|}
\newcommand\norm{\|}

 \captionsetup{margin=-0.5cm}

 \newcommand{\be}[1]{\begin{equation}\label{#1}}
\newcommand{\ee}{\end{equation}}

\def\squarebox#1{\hbox to #1{\hfill\vbox to #1{\vfill}}}

\begin{document}

\title[ Ground states  for a relativistic model in the nonrelativistic limit]{Ground states for a stationary mean-field model for a nucleon}

\author[M.J. Esteban]{Maria J. Esteban$^1$}
\address{$^1$Ceremade, Universit\'e Paris-Dauphine, Place de Lattre de Tassigny, F-75775 Paris C\'edex 16, France}
\email{esteban@ceremade.dauphine.fr}

\author[S. Rota Nodari]{Simona Rota Nodari$^{2,3}$}
\address{$^2$ CNRS, UMR 7598, Laboratoire Jacques-Louis Lions, F-75005, Paris, France}
\address{
$^3$ UPMC Univ Paris 06, UMR 7598, Laboratoire Jacques-Louis Lions, F-75005, Paris, France}
\email{rotanodari@ann.jussieu.fr}


\date{\today}

\begin{abstract}
In this paper we consider a variational problem related to a model for a nucleon interacting with the $\omega$ and $\sigma$ mesons in the atomic nucleus. The model is relativistic,  and we study it in a nuclear physics nonrelativistic limit, which is of a very different nature  than the nonrelativistic limit in the atomic physics. Ground states are shown to exist for a large class of values for the parameters of the problem, which are determined by the values of some physical constants.   
\end{abstract}

\maketitle

\section{Introduction}

This article is concerned with the existence of minimizers for the energy functional
\begin{equation}\label{eqenergyfunc}
{  \mathcal{E}(\varphi)=\int_{\mathbb{R}^3}{\frac{|\bm\sigma\cdot\nabla\varphi|^2}{(1-|\varphi|^2)_+}\,dx}-\frac{a}{2}\int_{\mathbb{R}^3}{|\varphi|^4\,dx}}
\end{equation}
under the $L^2$-normalization constraint 
\begin{equation}\label{eqconstraintnrl3d}
\int_{\mathbb R^3}{|\varphi|^2\,dx}=1.
\end{equation} 
More precisely, for a large class of values for the parameter $a$, we show the existence of solutions of the following minimization problem
\begin{equation}\label{eqdefinf}
I=\inf\left\{\mathcal{E}(\varphi)\,;\;\varphi\in X,\int_{\mathbb R^3}{|\varphi|^2\,dx}=1\right\}\,,
\end{equation}
where
\begin{equation}\label{eqfunctspace}
X=\left\{\varphi\in   L^2(\mathbb{R}^3,\mathbb C^2)\;;\;\int_{\mathbb{R}^3}{\frac{|\bm\sigma\cdot\nabla\varphi|^2}{(1-|\varphi|^2)_+}\,dx}<+\infty\right\}\, .
\end{equation}
We remind that $\bm\sigma$ denotes the vector of Pauli matrices $(\sigma_1,\sigma_2,\sigma_3)$,
$$\sigma _1=\left( \begin{matrix} 0 & 1
\\ 1 & 0 \\ \end{matrix} \right),\quad  \sigma_2=\left( \begin{matrix} 0 & -i \\
i & 0 \\  \end{matrix}\right),\quad  \sigma_3=\left( 
\begin{matrix} 1 & 0\\  0 &-1\\  \end{matrix}\right) \, .$$

The Euler-Lagrange equation of the energy functional $\mathcal E$ under the $L^2$-normalization constraint is given by the second order equation
\begin{equation}\label{eqEL}
-\bm\sigma\cdot\nabla\left(\frac{\bm\sigma\cdot\nabla\varphi}{{ (1-|\varphi|^2)_+}}\right)+\frac{|\bm\sigma\cdot\nabla\varphi|^2}{{ (1-|\varphi|^2)_+^2}}\varphi-a|\varphi|^2\varphi+b\varphi=0\,,
\end{equation}
where $b$ is the Lagrange multiplier associated with the $L^2$-constraint \eqref{eqconstraintnrl3d}.
Hence a solution of the minimization problem \eqref{eqdefinf} is a solution of the equation \eqref{eqEL}. Moreover, Lemma \ref{lempropX} below proves that  any $\varphi\in X$ satisfies $|\varphi|^2\leq 1$ a.e. in $\mathbb R^3$. So,  a minimizer for \eqref{eqdefinf} is actually a solution of 
\begin{equation}\label{eqNRLD3d}
-\bm\sigma\cdot\nabla\left(\frac{\bm\sigma\cdot\nabla\varphi}{1-|\varphi|^2}\right)+\frac{|\bm\sigma\cdot\nabla\varphi|^2}{(1-|\varphi|^2)^2}\varphi-a|\varphi|^2\varphi+b\varphi=0.
\end{equation}
Solutions of \eqref{eqNRLD3d} which are minimizers for $I$ are called ground states.

The equation \eqref{eqNRLD3d}  is a equivalent to the system
\begin{equation}\label{eqdiracnrl}
\left\{
\begin{aligned}
&i\bm{\sigma}\cdot\nabla\chi+|\chi|^2\varphi-a|\varphi|^2\varphi+b\varphi=0,\\
&-i\bm{\sigma}\cdot\nabla\varphi+\left(1-|\varphi|^2\right)\chi=0\,.
\end{aligned}
\right.
\end{equation}
As we formally derived in a previous paper (\cite{estebanrotanodarirad}), this system is the nuclear physics nonrelativistic limit of the $\sigma$-$\omega$ relativistic mean-field model (\cite{waleckasigmaomega, walecka}) in the case of a single nucleon. 

In \cite{estebanrotanodarirad}, we proved the existence of square integrable solutions of (\ref{eqdiracnrl}) in the particular form
\begin{equation}\label{eqsolrad}
\left(\begin{aligned}\varphi(x)\\\chi(x)\end{aligned}\right)=\left(
\begin{aligned}
&g(r)\left(\begin{aligned}1\\0\end{aligned}\right)\\
&i f(r)\left(\begin{aligned}&\cos\vartheta\\&\sin\vartheta e^{i\phi}\end{aligned}\right)
\end{aligned}
\right)\,,
\end{equation}
where $f$ and $g$ are real valued radial functions.  This ansatz corresponds to  particles with minimal angular momentum, that is, $j=1/2$ (for instance, see \cite{Thaller}). In this model, the equations for $f$ and $g$ read as follows:
\begin{equation}\label{eqrad}
\left\{\begin{aligned}
f'+\frac{2}{r}f&=g(f^2-a g^2+b)\,,\\
g'&=f(1-g^2)\,,
\end{aligned}\right.
\end{equation}
where we assumed $f(0)=0$ in order to avoid solutions with singularities at the origin, and we showed that given $a,b>0$ such that $a-2b>0$, there exists at least one nontrivial solution of (\ref{eqrad}) such that 
\begin{equation}\label{limiting}(f(r), g(r)) \longrightarrow (0,0)\quad\mbox{as}\quad  r\longrightarrow +\infty
\,.
\end{equation}
 
In this paper, we prove the existence of  solutions of the above nuclear physics nonrelativistic limit of the $\sigma$-$\omega$ relativistic mean-field model without considering any particular ansatz for the nucleon's wave function.

Note that \eqref{eqNRLD3d} is the Euler-Lagrange equation of the energy functional 
\begin{equation}\label{eqfunctF}
{  \mathcal{F}(\varphi)=\int_{\mathbb{R}^3}{\frac{|\bm\sigma\cdot\nabla\varphi|^2}{1-|\varphi|^2}\,dx}-\frac{a}{2}\int_{\mathbb{R}^3}{|\varphi|^4\,dx}}
\end{equation}
under the $L^2$ normalization constraint. In the Appendix, we prove that the energy functional $\mathcal F$ is not bounded from below. So,  trying to find solutions of \eqref{eqNRLD3d} which minimize the energy ${\mathcal F}$ is hopeless and the definition of ground states for \eqref{eqNRLD3d} based on this functional is not clear.

In our previous work (\cite{estebanrotanodarirad}), we showed that for all the solutions of \eqref{eqrad}  which are square integrable,  $g^2(r)<1$ in $[0,+\infty)$. Hence, according to this result, we conjecture that a solution of \eqref{eqNRLD3d} has to satisfy $\vb \varphi\vb^2\le 1$ a.e. in $\mathbb R^3$. As we prove in the Appendix, this assumption is also justified  when we consider the intermediate model 
\begin{equation}\label{eqphireal}\varphi=\left(\begin{array}{c}u\\ 0\end{array}\right)\end{equation} with $u:\mathbb{R}^3\rightarrow \mathbb{R}$ and $a>b$. Moreover, in the physical literature  finite nuclei are described via functions $\varphi$ such that, in the right units, $|\varphi|^2\le 1$ and $|\varphi|$ is rather flat near the center of the nucleus, and is equal to $0$ outside it, see \cite{ring, greinermaruhn}.

Note that if $\vb \varphi\vb^2\le 1$ a.e. in $\mathbb R^3$, then $\mathcal F(\varphi)=\mathcal E(\varphi)$, and the ground states of \eqref{eqNRLD3d} can be defined without further specification as the minimizers of $\mathcal E$.

The main result of our paper is the following
\begin{thm}\label{thmcc} If $I<0$ there exists a minimizer of (\ref{eqdefinf}). Moreover, $I<0$ if and only if $a>a_0$ where $a_0$ is a strictly positive constant. In particular, $10.96\approx\frac{2}{S^2}<a_0<48.06$, where $S$ the best constant in the Sobolev embedding of $H^1(\mathbb R^3)$ into $L^6(\mathbb R^3)$.
\end{thm}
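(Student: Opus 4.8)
The plan is to establish the two halves of the statement separately: existence of a minimizer assuming $I<0$, via concentration–compactness, and then the characterization $I<0\iff a>a_0$ together with the numerical bounds on $a_0$.

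For existence, let $(\varphi_n)\subset X$ be a minimizing sequence with $\|\varphi_n\|_{L^2}=1$. By Lemma~\ref{lempropX} we have $|\varphi_n|^2\le1$ a.e., so $\int|\varphi_n|^4\le\int|\varphi_n|^2=1$, and since $(1-|\varphi_n|^2)_+\le1$ while $\int|\bm\sigma\cdot\nabla\varphi_n|^2=\int|\nabla\varphi_n|^2$ (the cross terms integrate to zero), one gets $\|\nabla\varphi_n\|_{L^2}^2\le\mathcal E(\varphi_n)+\tfrac a2$; hence $(\varphi_n)$ is bounded in $H^1(\mathbb R^3,\mathbb C^2)$. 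Introduce $I_\lambda=\inf\{\mathcal E(\varphi):\varphi\in X,\ \|\varphi\|_{L^2}^2=\lambda\}$, so that $I=I_1$. Spreading a fixed profile, $\varphi\mapsto t^{-3/2}\varphi(\cdot/t)$ with $t\to\infty$, shows $I_\lambda\le0$ for all $\lambda>0$. The dilation $\varphi\mapsto\varphi(\cdot/\theta^{1/3})$ sends mass $\lambda$ to $\theta\lambda$ and satisfies $\mathcal E(\varphi(\cdot/\theta^{1/3}))=\theta^{1/3}\int\frac{|\bm\sigma\cdot\nabla\varphi|^2}{(1-|\varphi|^2)_+}-\theta\,\tfrac a2\int|\varphi|^4\le\theta\,\mathcal E(\varphi)$ for $\theta\ge1$; this yields $I_{\theta\lambda}\le\theta I_\lambda$ for every $\theta\ge1$, so that $\{\lambda:I_\lambda<0\}$ is an upward half-line $(\lambda_*,\infty)$.

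To strengthen this I would use that, by the Gagliardo–Nirenberg inequality $\|\varphi\|_{L^4}^4\le C\|\nabla\varphi\|_{L^2}^3\|\varphi\|_{L^2}$, any $\varphi$ of mass $\lambda$ with $\mathcal E(\varphi)<0$ satisfies $\int\frac{|\bm\sigma\cdot\nabla\varphi|^2}{(1-|\varphi|^2)_+}\ge\|\nabla\varphi\|_{L^2}^2\ge c(\lambda)>0$; inserting this into the identity above along a minimizing sequence (licit when $I_\lambda<0$) upgrades the estimate to $I_{\theta\lambda}<\theta I_\lambda$ for $\theta>1$. Combining this with $I_\mu\le0$ and $I_{\theta\lambda}\le\theta I_\lambda$ for $\theta\ge1$, a short case analysis according to whether $\alpha$ and $1-\alpha$ exceed $\lambda_*$ gives the strict subadditivity $I_1<I_\alpha+I_{1-\alpha}$ for every $\alpha\in(0,1)$, provided $I_1<0$. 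Now run concentration–compactness on $(\varphi_n)$: vanishing is excluded because it would force $\varphi_n\to0$ in $L^4$, hence $\liminf\mathcal E(\varphi_n)\ge0$, against $I<0$; so, after translation, $\varphi_n\rightharpoonup\varphi\not\equiv0$ in $H^1$, and writing $m=\|\varphi\|_{L^2}^2\in(0,1]$, a Brezis–Lieb splitting of $\mathcal E$ (the $\varphi$-dependent weight $(1-|\varphi|^2)_+^{-1}$ needs care, but the cut-off pieces remain in $X$ and the additivity holds up to $o(1)$) gives $I_1\ge I_m+I_{1-m}$, which by strict subadditivity forces $m=1$. Then $\varphi_n\to\varphi$ strongly in $L^2$, hence in $L^4$ by interpolation with the $H^1$ bound, so $\int|\varphi|^4=\lim\int|\varphi_n|^4$; since $\varphi\mapsto\int\frac{|\bm\sigma\cdot\nabla\varphi|^2}{(1-|\varphi|^2)_+}$ is weakly lower semicontinuous on $H^1$ (integrand convex in $\nabla\varphi$, $\varphi_n\to\varphi$ a.e.), we get $\mathcal E(\varphi)\le\liminf\mathcal E(\varphi_n)=I$, and the constraint makes $\varphi$ a minimizer.

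For the threshold, write $I=I(a)$. Since $\mathcal E$ is affine and non-increasing in $a$, $I(a)$ is concave, non-increasing and finite (the Gagliardo–Nirenberg bound keeps it $>-\infty$), hence continuous on $(0,\infty)$, so $\{a:I(a)<0\}=(a_0,\infty)$ for some $a_0\ge0$. For $a_0\ge2/S^2$: if $a\le2/S^2$ then, for $\|\varphi\|_{L^2}=1$, using $|\varphi|^2\le1$ one has $\int|\varphi|^4\le\min\{1,\|\varphi\|_{L^6}^3\}$ together with $\|\nabla\varphi\|_{L^2}^2\ge S^{-2}\|\varphi\|_{L^6}^2$, and distinguishing $\|\varphi\|_{L^6}\ge1$ from $\|\varphi\|_{L^6}<1$ yields $\mathcal E(\varphi)\ge0$ in both cases, hence $I(a)\ge0$. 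The strict bound $a_0>2/S^2$ would follow by showing that the constrained quotient $\beta:=\inf\{\|\nabla\varphi\|_{L^2}^2/\|\varphi\|_{L^4}^4:\|\varphi\|_{L^2}=1,\ 0\le|\varphi|\le1\text{ a.e.}\}$ is attained — concentration is forbidden by the $L^\infty$-bound and spreading is energetically penalized, so a minimizing sequence is compact — and that $\beta>S^{-2}$ at a minimizer, because equality in the two inequalities above is impossible (it would require a bang–bang profile, excluded in $H^1$, or an Aubin–Talenti profile, excluded in $L^2$); then $a_0\ge2\beta>2/S^2$. For $a_0<48.06$ one exhibits an explicit admissible $\varphi$ with $\mathcal E(\varphi)<0$ at $a=48.06$: taking $\varphi=g(|x|)(1,0)^{\top}$ reduces $\mathcal E$ to $4\pi\int_0^\infty(\frac{|g'|^2}{1-g^2}-\tfrac a2 g^4)\,r^2\,dr$, and the substitution $g=\sin\alpha$ turns it into $4\pi\int_0^\infty(|\alpha'|^2-\tfrac a2\sin^4\!\alpha)\,r^2\,dr$ under $\int_0^\infty\sin^2\!\alpha\,r^2\,dr=(4\pi)^{-1}$, for which a suitably chosen explicit $\alpha$ makes the integral negative.

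The main obstacle is the exclusion of dichotomy: beyond proving the strict subadditivity of $\lambda\mapsto I_\lambda$ (which rests on the Gagliardo–Nirenberg lower bound for the weighted kinetic term on negative-energy functions), one has to establish the Brezis–Lieb-type additivity of $\mathcal E$ under splitting, where the nonstandard, $\varphi$-dependent density $(1-|\varphi|^2)_+^{-1}$ — degenerate where $|\varphi|\to1$ — makes the cut-off estimates more delicate than in a standard mass-constrained problem. A secondary delicate point is the strict inequality $a_0>2/S^2$, which hinges on the attainment of the constrained Sobolev-type quotient $\beta$.
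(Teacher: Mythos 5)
Your overall strategy coincides with the paper's: concentration--compactness with strict subadditivity $I<I_\nu+I_{1-\nu}$ coming from the scaling $\vartheta^{1/3}$ versus $\vartheta$ plus the observation that minimizing sequences with negative energy have kinetic term bounded away from zero; vanishing excluded through the $L^4$ norm; weak lower semicontinuity of the weighted kinetic term; Sobolev and H\"older for the lower bound on $a_0$; and a radial test function for the upper bound (your substitution $g=\sin\alpha$ is exactly the paper's $\bar f_R(r)=\cos(r/R)$ in disguise). The genuine gap is the step you yourself defer as ``the main obstacle'': the splitting $I\ge I_m+I_{1-m}$ (equivalently, the exclusion of dichotomy). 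Writing $\varphi_1^n=\xi_R(\cdot-y_n)\varphi_n$ and $\varphi_2^n=\zeta_R(\cdot-y_n)\varphi_n$, one must show that these pieces are $X$-bounded (hence admissible for $I_m$, $I_{1-m}$) and that the weighted kinetic energies superadd up to $o(1)$. The error term is of the form $\int\frac{|\nabla\xi_R|^2|\varphi_n|^2}{(1-\xi_R^2|\varphi_n|^2)_+}\,dx$, whose denominator can degenerate where $\xi_R$ is close to $1$ and $|\varphi_n|$ is close to $1$. Since $|\varphi_n|\le 1$ one has $1-\xi_R^2|\varphi_n|^2\ge 1-\xi_R^2$, so everything hinges on using cut-offs with $\frac{|\nabla\xi|^2}{1-\xi^2}$ bounded; the paper constructs explicit exponential-type $\xi,\zeta$ precisely for this, and moreover uses $\xi^2+\zeta^2\le 1$ together with $(1-|\varphi_1^n|^2-|\varphi_2^n|^2)_+\ge(1-|\varphi_n|^2)_+$ to get \eqref{eqlocalizfunct}. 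Asserting that ``the cut-off pieces remain in $X$ and the additivity holds up to $o(1)$'' without such a construction leaves unproved the one genuinely new ingredient of this problem; your missing-mass/Brezis--Lieb formulation instead of Lions' trichotomy is only a cosmetic difference and does not remove this difficulty.

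Two secondary points. For the lower semicontinuity you invoke convexity in the gradient plus a.e.\ convergence; this can indeed be justified (Ioffe-type theorems, or the paper's elementary truncation argument, Lemma \ref{lemwlsc}, applied to $f_n=(1-|\tilde\varphi_n|^2)_+^{-1}$, $g_n=|\bm\sigma\cdot\nabla\tilde\varphi_n|$), so this is acceptable modulo a precise citation. For the threshold, your argument for $a\le 2/S^2$ is the paper's; your proposed route to the strict inequality $a_0>2/S^2$ additionally requires the attainment of the constrained quotient $\beta$ and the strict bound $\beta>S^{-2}$, neither of which you establish (attainment needs its own compactness proof), whereas the paper gets its estimate directly from Sobolev, H\"older and $|\varphi|\le 1$. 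Finally, $a_0<48.06$ is a numerical statement: with your ansatz the explicit choice $\alpha=(\pi/2-|x|/R)_+$, i.e.\ $g=\cos(|x|/R)$ with $R$ fixed by the mass constraint, yields $\bar a=\frac{8\pi^{10/3}\left(\frac{2}{3}(\pi^2-6)\right)^{2/3}}{3(2\pi^2-15)}\approx 48.06$; without carrying out such a computation the upper estimate in the theorem is not proved.
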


\begin{rem} The upper estimate for $a_0$ is obtained by using a particular test function and is probably not optimal .
\end{rem}

The proof of the above theorem is an application of the concentration-compactness principle (\cite{lionscc1, lionscc2}) with some new ingredients. The main new difficulty is due to the presence of the term $\int_{\mathbb{R}^3}{\frac{|\bm\sigma\cdot\nabla\varphi|^2}{(1-|\varphi|^2)_+}\,dx}$ in the energy functional. As we will see below, to rule out the dichotomy case in the concentration-compactness lemma we have to choose \emph{ad-hoc} cut-off functions allowing us to deal with possible singularities of the integrand. This is also necessary in order to show the localization properties of $\int_{\mathbb{R}^3}{\frac{|\bm\sigma\cdot\nabla\varphi|^2}{(1-|\varphi|^2)_+}\,dx}$.


In the next section, we will establish a concentration-compactness lemma in $X$ and then apply it to prove our main result.  The Appendix contains some auxiliary results about various properties of the model problem that we consider here.

\section{Proof of Theorem \ref{thmcc}}\label{secproofthmcc}
To prove this theorem, we are going to apply a concentration-compactness lemma that we state below. The reader may refer to \cite{lionscc1} and \cite{lionscc2} for more details on this kind of approach. {  The particular shape of the energy functional, where the kinetic energy term is multiplied by a function which could present singularities as $|\varphi|$ gets close to $1$ creates some complications in the use of concentration-compactness, that we deal with by using very particular cut-off functions.}

Let us introduce
\begin{equation}\label{eqdefinfnu}
I_\nu=\inf\left\{\mathcal{E}(\varphi)\;;\;\varphi\in X,\int_{\mathbb R^3}{|\varphi|^2\,dx}=\nu\right\}
\end{equation}
where $\nu>0$ and $I_1=I$, and we make a few preliminary observations. 

\begin{lem}[\cite{serebound}]\label{lempropX}
Let $\varphi\in X$. Then, $\varphi\in H^1(\mathbb R^3,\mathbb C^2)$ and  $|\varphi|^2\le 1$ a.e. in $\mathbb R^3$.
\end{lem}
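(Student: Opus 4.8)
The plan is to first establish the pointwise bound $|\varphi|^2 \le 1$ a.e., and then deduce $H^1$ regularity from it. For the pointwise bound, I would argue by contradiction: suppose the set $\Omega = \{x : |\varphi(x)|^2 > 1\}$ has positive measure. The key observation is that on the open-ish set where $|\varphi|^2 > 1$ the denominator $(1-|\varphi|^2)_+$ vanishes, so finiteness of $\int_{\mathbb{R}^3} \frac{|\bm\sigma\cdot\nabla\varphi|^2}{(1-|\varphi|^2)_+}\,dx$ forces $\bm\sigma\cdot\nabla\varphi = 0$ a.e. on $\Omega$. Since $\bm\sigma\cdot\nabla$ is (up to the factor $-i$) an elliptic first-order operator — indeed $(\bm\sigma\cdot\nabla)^2 = \Delta\, \mathrm{Id}$ — a function whose $\bm\sigma\cdot\nabla$ derivative vanishes on a set is locally constant there in an appropriate sense; more precisely, on each connected component $|\varphi|$ is constant, and since $|\varphi|^2>1$ there while $\varphi \in L^2$, the component must have finite measure and the constant value exceeds $1$, which one then has to rule out. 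The cleanest route is to introduce a truncation: consider $\psi = \varphi \cdot \min(1, 1/|\varphi|)$ (projection onto the closed unit ball), note $\psi \in L^2$ with $|\psi| \le |\varphi|$, and show $\mathcal{E}$-type quantities do not increase while the constraint changes in a controlled way — but since we only want the a.e. bound and not optimality, it is simpler to invoke the cited result \cite{serebound} directly, or to reproduce the elementary chain-rule argument: on $\Omega$ one has $\nabla|\varphi|^2 = 0$ a.e. (because $\bm\sigma\cdot\nabla\varphi=0$ implies $\nabla\varphi=0$ componentwise there, using $|\bm\sigma\cdot\nabla\varphi|^2 = |\nabla\varphi|^2$ pointwise after summing), hence $|\varphi|^2$ is a.e. constant on each component of $\{|\varphi|^2 > 1\}$, and an $L^2$ function cannot be bounded below by a constant $>1$ on a set of infinite measure, while on a set of finite measure the boundary behavior contradicts $\nabla|\varphi| = 0$ together with continuity of the $H^1_{\mathrm{loc}}$ representative unless the set is empty.

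Once $|\varphi|^2 \le 1$ a.e. is known, the $H^1$ bound is immediate: on $\{|\varphi|^2 \le 1\}$ we have $1 - |\varphi|^2 = (1-|\varphi|^2)_+ \ge 0$, hence
\begin{equation*}
\int_{\mathbb{R}^3} |\bm\sigma\cdot\nabla\varphi|^2\,dx = \int_{\mathbb{R}^3} (1-|\varphi|^2)\,\frac{|\bm\sigma\cdot\nabla\varphi|^2}{(1-|\varphi|^2)_+}\,dx \le \int_{\mathbb{R}^3} \frac{|\bm\sigma\cdot\nabla\varphi|^2}{(1-|\varphi|^2)_+}\,dx < +\infty,
\end{equation*}
where I used $1 - |\varphi|^2 \le 1$. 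Combined with the pointwise identity $|\bm\sigma\cdot\nabla\varphi|^2 = |\nabla\varphi|^2$ a.e. (which follows from $(\bm\sigma\cdot\nabla)(\bm\sigma\cdot\nabla) = \Delta$ and integration by parts, or directly from the anticommutation relations $\sigma_i\sigma_j + \sigma_j\sigma_i = 2\delta_{ij}$ applied to $\nabla\varphi$), this gives $\nabla\varphi \in L^2$, so $\varphi \in H^1(\mathbb{R}^3,\mathbb{C}^2)$.

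The main obstacle is the first part: making rigorous the claim that $\bm\sigma\cdot\nabla\varphi = 0$ a.e. on a set forces $|\varphi|$ to be constant there, because a priori $\varphi$ is only known to be in $L^2$ with a weighted-gradient condition, so one must first bootstrap enough local regularity to even make sense of "$\nabla\varphi = 0$ on $\Omega$." The careful way is: for $\delta > 0$ let $\Omega_\delta = \{|\varphi|^2 > 1 + \delta\}$; on a neighborhood (in the measure-theoretic sense) of $\Omega_\delta$ the weight $(1-|\varphi|^2)_+^{-1}$ would need to be interpreted as $+\infty$, so finiteness of the integral forces $\nabla\varphi \in L^2_{\mathrm{loc}}$ globally first — actually the simplest fix is to note the integrand is $+\infty$ on $\Omega$ unless $\bm\sigma\cdot\nabla\varphi = 0$ a.e. there, and a distribution $\varphi \in L^1_{\mathrm{loc}}$ with $\bm\sigma\cdot\nabla\varphi = 0$ as a distribution on the open set $\mathrm{int}(\Omega)$ is harmonic hence smooth there, reducing to the classical statement. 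Handling the part of $\Omega$ not in its interior (measure-theoretic boundary effects) is where one genuinely needs the argument of \cite{serebound}, which I would cite rather than reprove in detail.
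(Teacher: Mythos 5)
Your second step (the $H^1$ bound) is essentially fine, but it is the easy half and, as written, it is also slightly misstated: $|\bm\sigma\cdot\nabla\varphi|^2=|\nabla\varphi|^2$ is \emph{not} a pointwise identity. Expanding with the anticommutation relations leaves cross terms $2\,\mathrm{Re}\{(\sigma_j\partial_j\varphi)\cdot(\sigma_k\partial_k\varphi)\}$, $j\neq k$, which vanish only after integration (equivalently, on the Fourier side $(\bm\sigma\cdot\xi)^2=|\xi|^2\,\mathrm{Id}$ gives $\|\bm\sigma\cdot\nabla\varphi\|_{L^2}=\|\nabla\varphi\|_{L^2}$). This is not cosmetic: your first step leans on the pointwise version when you claim that $\bm\sigma\cdot\nabla\varphi=0$ a.e.\ on $\Omega=\{|\varphi|^2>1\}$ forces $\nabla\varphi=0$, hence $\nabla|\varphi|^2=0$, a.e.\ on $\Omega$. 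Pointwise, $\bm\sigma\cdot\nabla\varphi=0$ is two complex equations in the six complex components of $\nabla\varphi$ and does not imply $\nabla\varphi=0$. Even granting that step, $\Omega$ is only a measurable set, so ``constant on each connected component'' has no content, and the closing argument via ``continuity of the $H^1_{\mathrm{loc}}$ representative'' fails since $H^1$ functions in dimension $3$ need not be continuous. You acknowledge exactly this obstruction (the measure-theoretic boundary of $\Omega$) and propose to cite \cite{serebound} for it --- but that obstruction \emph{is} the lemma, so the proposal does not contain a proof. There is also an ordering problem: your route needs Sobolev regularity of $\varphi$ to run any chain-rule argument on $\Omega$, yet you derive $\varphi\in H^1$ only afterwards; in fact the $H^1$ bound never requires $|\varphi|\le1$, since $(1-|\varphi|^2)_+\le1$ everywhere.

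The paper avoids all of this. It first gets $\varphi\in H^1$ directly from $(1-|\varphi|^2)_+\le1$ together with the \emph{integrated} identity $\int|\nabla\varphi|^2=\int|\bm\sigma\cdot\nabla\varphi|^2$. Then, instead of analyzing $\varphi$ on $\{|\varphi|>1\}$, it fixes a unit vector $n\in\mathbb C^2$ and tests with the scalar truncation $f=(\mathrm{Re}\{n\cdot\varphi\}-1)_+\in H^1$ and $\psi=fn$: the gradient of $f$ is carried by $\{\mathrm{Re}\{n\cdot\varphi\}\ge1\}\subset\{|\varphi|\ge1\}$, where finiteness of the $X$-integral does legitimately force $\bm\sigma\cdot\nabla\varphi=0$ a.e., and the chain of integral identities $\int|\nabla f|^2=\int\mathrm{Re}\{\nabla\psi\cdot\nabla\varphi\}=\int\mathrm{Re}\{(\bm\sigma\cdot\nabla\psi)\cdot\mathbbm{1}_{\mathrm{Re}\{n\cdot\varphi\}\ge1}(\bm\sigma\cdot\nabla\varphi)\}=0$ yields $f=0$ a.e. Letting $n$ range over unit vectors gives $|\varphi|\le1$ a.e. Note how this uses the $\bm\sigma\cdot\nabla$ versus $\nabla$ comparison only in integrated form and never needs any topological or boundary structure of the bad set; if you want to repair your plan, this truncation-and-test device is the missing ingredient.
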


\begin{proof}
First, by a straightforward calculation, we obtain
\begin{align*}
\int_{\mathbb{R}^3}{{|\nabla\varphi|^2}\,dx}=\int_{\mathbb{R}^3}{{|\bm\sigma\cdot\nabla\varphi|^2}\,dx}\le\int_{\mathbb{R}^3}{\frac{|\bm\sigma\cdot\nabla\varphi|^2}{(1-|\varphi|^2)_+}\,dx}<+\infty.
\end{align*}
Hence, $\varphi \in H^1(\mathbb R^3,\mathbb C^2)$. Next, let $n\in \mathbb C^2$ such that $\vb n\vb=1$.
Note that for $\varphi\in X$,  $\mathbbm 1_{\mathrm{Re}\{n\cdot\varphi\}\ge 1}(\bm\sigma\cdot\nabla\varphi)=0$, a.e. in $\mathbb{R}^3$.    Define the functions $f=(\mathrm{Re}\{n\cdot\varphi\}-1)_+$ and $\psi=fn$. (Note that for $2$ complex vectors $A, B\in \mathbb C^2$,  $A\cdot B$ denotes the scalar product $\Sigma_{i=1}^2 \overline{A}_iB$, where $\overline{z}$ stands for the complex conjugate of any complex number $z$).

We have $f\in H^1(\mathbb R^3,\mathbb R)$ and $\psi \in H^1(\mathbb R^3,\mathbb C^2)$. Moreover, for $k=1,2,3$,
$$
\partial_k \psi=\partial_k f\, n\quad\mbox{ and }\quad \partial_k f=\mathrm{Re}\left\{n\cdot
\partial_k \varphi\right\}\mathbbm{1}_{\mathrm{Re}\{n\cdot\varphi\}\ge 1} {  = n\cdot \partial_k\psi}\,.
$$
Hence, we obtain 
\begin{align*}
\int_{\mathbb R^3}\vb\nabla f\vb^2\,dx=& \int_{\mathbb R^3}\vb\nabla \psi\vb^2\,dx=\int_{\mathbb R^3}\sum_{k=1}^3\mathrm{Re} \left\{\mathrm{Re}\left\{n\cdot
\partial_k \varphi\right\}n\cdot\partial_k \psi \right\}\,dx\\
=&\int_{\mathbb R^3}\sum_{k=1}^3\mathrm{Re}\left\{n\cdot
\partial_k \varphi\right\}\mathrm{Re} \left\{n\cdot\partial_k \psi \right\}\,dx=\int_{\mathbb R^3}\sum_{k=1}^3\mathrm{Re}\left\{\partial_k f\,n\cdot
\partial_k \varphi\right\}\,dx\\
=& \int_{\mathbb R^3}\mathrm{Re}\left\{\nabla \psi\cdot\nabla \varphi\right\}\,dx= \int_{\mathbb R^3}\mathrm{Re}\left\{(\bm\sigma\cdot\nabla \psi)\cdot(\bm\sigma\cdot\nabla \varphi)\right\}\,dx\\
=& \int_{\mathbb R^3}\mathrm{Re}\left\{(\bm\sigma\cdot\nabla \psi)\cdot\mathbbm 1_{\mathrm{Re}\{n\cdot\varphi\}\ge 1}(\bm\sigma\cdot\nabla \varphi)\right\}\,dx=0
\end{align*}
As a consequence, $f=0$ a.e. in $\mathbb R^3$ that means $\mathrm{Re}\{n\cdot\varphi\}\le1$ a.e. for all $n\in \mathbb C^2$ such that $\vb n\vb=1$. {  This clearly  implies that $\vb \varphi\vb \le 1$ a.e. in $\mathbb R^3$. }
\end{proof}

In what follows, we say that a sequence $\{\varphi_n\}_n$ is $X$-bounded if there exists a positive constant $C$ independent of $n$ such that
\begin{equation}\label{eqXbounded}
\norm\varphi_n\norm^2_{L^2}+\int_{\mathbb{R}^3}{\frac{|\bm\sigma\cdot\nabla\varphi_n|^2}{(1-|\varphi_n|^2)_+}\,dx}\le C\,.
\end{equation}

\begin{lem}\label{borne-sm} Let $\{\varphi_n\}_n$ be a minimizing sequence of (\ref{eqdefinfnu}), then $\{\varphi_n\}_n$ is $X$-bounded, bounded in $H^1(\mathbb R^3)$ and $I_{\nu}>-\infty$.
\end{lem}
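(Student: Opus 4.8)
The plan is to show that a minimizing sequence $\{\varphi_n\}_n$ of $I_\nu$ cannot have $\mathcal{E}(\varphi_n)\to-\infty$, and that the kinetic term stays bounded. Since $\{\varphi_n\}_n$ is minimizing, we have $\mathcal{E}(\varphi_n)\le I_\nu+1$ for $n$ large, so it suffices to bound $\int_{\mathbb{R}^3}\frac{|\bm\sigma\cdot\nabla\varphi_n|^2}{(1-|\varphi_n|^2)_+}\,dx$ from above in terms of $\mathcal{E}(\varphi_n)$ and $\|\varphi_n\|_{L^2}^2=\nu$, and to observe that the sign of the second term in $\mathcal{E}$ makes this a genuine a priori estimate. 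First I would use Lemma \ref{lempropX}: every $\varphi_n\in X$ satisfies $|\varphi_n|^2\le1$ a.e., hence $\int|\nabla\varphi_n|^2\le\int\frac{|\bm\sigma\cdot\nabla\varphi_n|^2}{(1-|\varphi_n|^2)_+}$. So $\varphi_n\in H^1$ automatically, and bounding the $X$-norm will simultaneously bound the $H^1$-norm once we control the $L^2$-norm (which equals $\nu$ by the constraint).

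The heart of the matter is to control the $L^4$-term $\frac{a}{2}\int|\varphi_n|^4\,dx$. Here I would use the pointwise bound $|\varphi_n|\le1$ from Lemma \ref{lempropX}: it gives $|\varphi_n|^4\le|\varphi_n|^2$, so
\begin{equation*}
\frac{a}{2}\int_{\mathbb{R}^3}|\varphi_n|^4\,dx\le\frac{a}{2}\int_{\mathbb{R}^3}|\varphi_n|^2\,dx=\frac{a\nu}{2}.
\end{equation*}
Consequently
\begin{equation*}
\int_{\mathbb{R}^3}\frac{|\bm\sigma\cdot\nabla\varphi_n|^2}{(1-|\varphi_n|^2)_+}\,dx=\mathcal{E}(\varphi_n)+\frac{a}{2}\int_{\mathbb{R}^3}|\varphi_n|^4\,dx\le\mathcal{E}(\varphi_n)+\frac{a\nu}{2}\le I_\nu+1+\frac{a\nu}{2}
\end{equation*}
for $n$ large enough. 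Adding $\|\varphi_n\|_{L^2}^2=\nu$ gives the $X$-boundedness \eqref{eqXbounded} with an explicit constant $C$. The same chain of inequalities, applied with $\mathcal{E}(\varphi_n)$ replaced by any $\varphi\in X$ with $\|\varphi\|_{L^2}^2=\nu$, shows $\mathcal{E}(\varphi)\ge-\frac{a\nu}{2}$, hence $I_\nu\ge-\frac{a\nu}{2}>-\infty$.

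Finally, $H^1$-boundedness follows by combining the first inequality in the proof of Lemma \ref{lempropX}, namely $\int|\nabla\varphi_n|^2\le\int\frac{|\bm\sigma\cdot\nabla\varphi_n|^2}{(1-|\varphi_n|^2)_+}$, with the $X$-bound just obtained and $\|\varphi_n\|_{L^2}^2=\nu$. I do not expect any serious obstacle here: the key insight is simply that the pointwise constraint $|\varphi|\le1$ coming from the structure of $X$ (Lemma \ref{lempropX}) automatically tames the otherwise dangerous quartic term, turning the infimum into one over a coercive functional. The only point requiring a word of care is that one should invoke Lemma \ref{lempropX} at the outset so that $(1-|\varphi_n|^2)_+=1-|\varphi_n|^2\ge0$ is used consistently and the manipulations with $\mathcal{E}$ are legitimate.
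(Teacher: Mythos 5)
Your proof is correct and follows essentially the same route as the paper: the pointwise bound $|\varphi_n|\le 1$ from Lemma \ref{lempropX} gives $\frac{a}{2}\int|\varphi_n|^4\le\frac{a\nu}{2}$, whence the nonnegative kinetic term (which dominates $\int|\nabla\varphi_n|^2$) is bounded by $\mathcal E(\varphi_n)+\frac{a\nu}{2}$, yielding $X$- and $H^1$-boundedness and $I_\nu\ge-\frac{a\nu}{2}$. The only difference is cosmetic: you state the lower bound $\mathcal E(\varphi)\ge-\frac{a\nu}{2}$ separately before using $\mathcal E(\varphi_n)\le I_\nu+1$, while the paper simply starts from $\mathcal E(\varphi_n)\le C$ along the minimizing sequence.
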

\begin{proof} Indeed, since $\{\varphi_n\}_n$ is a minimizing sequence, there exists a constant $C$ such that
\begin{align*}
C\ge \mathcal E (\varphi_n)&\ge\int_{\mathbb{R}^3}{\frac{|\bm\sigma\cdot\nabla\varphi_n|^2}{(1-|\varphi_n|^2)_+}\,dx}-\frac{a}{2}\nu\ge \int_{\mathbb{R}^3}{{|\bm\sigma\cdot\nabla\varphi_n|^2}\,dx}-\frac{a}{2}\nu\\
&= \int_{\mathbb{R}^3}{{|\nabla\varphi_n|^2}\,dx}-\frac{a}{2}\nu\ge -\frac{a}{2}\nu.
\end{align*}
As a conclusion, $\|\varphi_n\|_{H^1}$ is bounded independently of $n$ and $I_\nu$ is bounded from below.
\end{proof}

\begin{lem}\label{inegalites-strictes} For all $\nu\in(0,1)$,  $I_\nu\le0$. Moreover, 
the strict inequality $I<0$ is equivalent to the strict concentration-compactness inequalities 
\begin{equation}\label{eqstrictcc}
I<I_\nu+I_{1-\nu}\quad,\quad\forall\nu\in(0,1)\,.
\end{equation}
\end{lem}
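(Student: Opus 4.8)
The plan is to prove the two assertions in turn, the equivalence resting on a strict scaling inequality for the map $\nu\mapsto I_\nu$.

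\emph{Step 1: $I_\nu\le 0$ for $\nu\in(0,1)$.} I would use test functions of vanishing amplitude and large spread. Fix $\psi\in C^\infty_c(\mathbb R^3,\mathbb C^2)$ with $\int_{\mathbb R^3}\vb\psi\vb^2\,dx=1$, and for small $\mu>0$ set $t=(\nu/\mu^2)^{1/3}$ and $\varphi_\mu(x)=\mu\,\psi(x/t)$. For $\mu$ small, $\vb\varphi_\mu\vb^2\le\mu^2\norm\psi\norm_{\infty}^2<1$, so $\varphi_\mu\in X$, and $\int\vb\varphi_\mu\vb^2=\mu^2t^3=\nu$. A direct scaling computation gives
\[
\mathcal E(\varphi_\mu)=\nu^{1/3}\mu^{4/3}\int_{\mathbb R^3}\frac{\vb\bm\sigma\cdot\nabla\psi\vb^2}{1-\mu^2\vb\psi\vb^2}\,dx-\frac a2\,\nu\mu^2\int_{\mathbb R^3}\vb\psi\vb^4\,dx\,;
\]
since the first integral tends to $\int\vb\nabla\psi\vb^2$ as $\mu\to0$ while $4/3<2$, the right-hand side tends to $0$ as $\mu\to0^+$, whence $I_\nu\le\lim_{\mu\to0^+}\mathcal E(\varphi_\mu)=0$.

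\emph{Step 2: the key scaling inequality -- if $I_\mu<0$ then $I_{\theta\mu}<\theta I_\mu$ for every $\theta>1$.} This is where the main difficulty lies: one has to get a \emph{strict} inequality, not merely $\le$. Given $\eps\in(0,|I_\mu|)$, pick $\varphi\in X$ with $\int\vb\varphi\vb^2=\mu$ and $\mathcal E(\varphi)<I_\mu+\eps<0$ (recall $I_\mu>-\infty$ by Lemma \ref{borne-sm}). Write $\mathcal E(\varphi)=K(\varphi)-\frac a2Q(\varphi)$ with $K(\varphi)=\int\frac{\vb\bm\sigma\cdot\nabla\varphi\vb^2}{(1-\vb\varphi\vb^2)_+}\ge\norm\nabla\varphi\norm_{L^2}^2$ and $Q(\varphi)=\int\vb\varphi\vb^4$. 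The Gagliardo--Nirenberg inequality $Q(\varphi)\le C\norm\nabla\varphi\norm_{L^2}^3\norm\varphi\norm_{L^2}$ together with $\mathcal E(\varphi)<0$ forces $\norm\nabla\varphi\norm_{L^2}^2>c_0(\mu)$, where $c_0(\mu)>0$ depends only on $a$ and $\mu$; hence $K(\varphi)>c_0(\mu)$ \emph{uniformly} in $\eps$ and in the chosen $\varphi$. Applying the dilation $\varphi_s(x)=\varphi(sx)$ with $s=\theta^{-1/3}$ gives $\int\vb\varphi_s\vb^2=\theta\mu$, $K(\varphi_s)=\theta^{1/3}K(\varphi)$, $Q(\varphi_s)=\theta Q(\varphi)$, so $\mathcal E(\varphi_s)=\theta\,\mathcal E(\varphi)+(\theta^{1/3}-\theta)K(\varphi)$ and therefore
\[
I_{\theta\mu}\le\mathcal E(\varphi_s)<\theta(I_\mu+\eps)+(\theta^{1/3}-\theta)\,c_0(\mu)\,.
\]
Letting $\eps\to0^+$ (with $c_0(\mu)$ fixed) yields $I_{\theta\mu}\le\theta I_\mu+(\theta^{1/3}-\theta)c_0(\mu)<\theta I_\mu$, since $\theta^{1/3}<\theta$.

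\emph{Step 3: conclusion.} The implication ``the strict inequalities hold $\Rightarrow I<0$'' is immediate: fixing any $\nu\in(0,1)$ and using Step 1, $I<I_\nu+I_{1-\nu}\le0$. Conversely, assume $I=I_1<0$ and fix $\nu\in(0,1)$; by Step 1, $I_\nu\le0$ and $I_{1-\nu}\le0$. If $I_\nu=I_{1-\nu}=0$, then $I_\nu+I_{1-\nu}=0>I$. Otherwise at least one is negative, say $I_\nu<0$ (exchanging $\nu$ and $1-\nu$ if needed); Step 2 with $\mu=\nu$, $\theta=1/\nu$ gives $I=I_1<\tfrac1\nu I_\nu$, i.e. $I_\nu>\nu I$. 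If moreover $I_{1-\nu}<0$, then likewise $I_{1-\nu}>(1-\nu)I$, so $I_\nu+I_{1-\nu}>\nu I+(1-\nu)I=I$; while if $I_{1-\nu}=0$, then $I_\nu+I_{1-\nu}=I_\nu>\nu I>I$, the last step because $\nu\in(0,1)$ and $I<0$. In every case $I<I_\nu+I_{1-\nu}$. Besides the strictness in Step 2, the only other point requiring care is exactly this case distinction, needed because the scaling inequality is vacuous when $I_\nu$ or $I_{1-\nu}$ vanishes.
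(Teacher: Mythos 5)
Your proof is correct and follows essentially the same route as the paper: vanishing-amplitude dilations give $I_\nu\le 0$, and the strict scaling inequality $I_{\theta\mu}<\theta I_\mu$ (valid when $I_\mu<0$, $\theta>1$) is exactly the paper's inequality \eqref{eqinequalitycc}, from which the equivalence with $I<0$ follows. The only differences are in the details you make explicit: you get the uniform positive lower bound on the kinetic term via Gagliardo--Nirenberg where the paper argues via Sobolev embeddings along minimizing sequences, and you carry out by hand (including the cases $I_\nu=0$ or $I_{1-\nu}=0$) the equivalence argument that the paper delegates to Lemma II.1 of \cite{lionscc1}.
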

\begin{proof}
Indeed, let $\varphi\in\mathcal D(\mathbb R^3)$ such that  $\int_{\mathbb R^3}{|\varphi|^2}=\nu$ and $\int_{\mathbb{R}^3}{\frac{|\bm\sigma\cdot\nabla\varphi|^2}{(1-|\varphi|^2)_+}\,dx}<+\infty$, and let $\varphi_\gamma(x)=\gamma^{-3/2}\varphi(\gamma^{-1}x)$ for $\gamma>1$. Then
\begin{align*}
I_\nu\le \mathcal{E}(\varphi_\gamma)=\frac{1}{\gamma^2}\int_{\mathbb{R}^3}{\frac{|\bm\sigma\cdot\nabla\varphi|^2}{\left(1-\frac{1}{\gamma^3}|\varphi|^2\right)_+}\,dx}-\frac{1}{\gamma^3}\frac{a}{2}\int_{\mathbb{R}^3}{|\varphi|^4\,dx}\,,
\end{align*}
and letting $\gamma\to+\infty$, we prove $I_\nu\le 0$.

By a scaling argument, we obtain 
$$
I_{\vartheta\nu}\le\inf\left\{\vartheta^{1/3}\int_{\mathbb{R}^3}{\frac{|\bm\sigma\cdot\nabla\varphi|^2}{(1-|\varphi|^2)_+}\,dx}-\frac{\vartheta\,a}{2}\int_{\mathbb{R}^3}{|\varphi|^4\,dx}|\varphi\in X, \int_{\mathbb R^3}{|\varphi|^2\,dx}=\nu\right\},
$$
and, if $I_\nu<0$, we may restrict the infimum $I_\nu$ to elements $\varphi$ satisfying
$$
K(\varphi)=\int_{\mathbb{R}^3}{\frac{|\bm\sigma\cdot\nabla\varphi|^2}{(1-|\varphi|^2)_+}\,dx}\ge \delta>0\,,
$$
{  for some $\delta>0$.} Indeed, if there is a minimizing sequence $\{\varphi_n\}_n$ of $I_\nu$ such that $K(\varphi_n)\xrightarrow[n]{}0$, then, by Sobolev embeddings, $\varphi_n\xrightarrow[n]{}0$ in $L^p(\mathbb R^3)$ for $2<p\le 6$ and $I_\nu\ge 0$. As a conclusion, if $I_\nu<0$, then, for all $\vartheta>1$ and for all $\nu>0$,
\begin{equation}\label{eqinequalitycc}
I_{\vartheta\nu}<\vartheta\inf\left\{\mathcal{E}(\varphi)|\varphi\in X, K(\varphi)>0, \int_{\mathbb R^3}{|\varphi|^2\,dx}=\nu\right\}=\vartheta I_\nu\,.
\end{equation}
 Hence, a straightforward argument (see lemma II.$1$ of  \cite{lionscc1}) proves that (\ref{eqstrictcc}) is equivalent to $I<0$.
\end{proof}

{  In order to prove Theorem \ref{thmcc} we need to analyse the possible behaviour of minimizing sequences for $I$. This is done in the following lemma.}

\begin{lem}\label{lemcc}
Let $\{\varphi_n\}_n$ be a $X$-bounded sequence such that  $\int_{\mathbb R^3}{|\varphi_n|^2\,dx}=1$ for all $n\ge 0$. Then there exists a subsequence that we still denote by $\{\varphi_n\}_n$ such that one of the following properties holds:
\begin{enumerate}
\item\label{itemcccompactness} Compactness up to a translation: there exists a sequence $\{y_n\}_n \subset \mathbb{R}^3$ such that, for every $\varepsilon>0$, there exists $ 0<R<\infty$ with
$$
\int_{B\left(y_n,R\right)}{|\varphi_n|^2\,dx}\ge 1-\varepsilon;
$$
\item\label{itemccvanishing} Vanishing: for all $0<R<\infty$
$$
\sup\limits_{y\in \mathbb{R}^3}\int_{B\left(y,R\right)}{|\varphi_n|^2\,dx}\xrightarrow[n]{} 0;
$$
\item\label{itemccdichotomy} Dichotomy: there exist $\alpha \in(0,1)$ and $n_0\ge 0$ such that
there exist two $X$-bounded sequences, $\{\varphi_1^n\}_{n\ge n_0}$ and $\{\varphi_2^n\}_{n\ge n_0}$, 
satisfying the following properties:
\begin{align}\label{eqdichconvlp}
\|\varphi_n-(\varphi_1^n+\varphi_2^n)\|_{L^p}\xrightarrow[n]{} 0,&\mbox{ for } 2\le p<6,
\end{align}
and
\begin{equation}\label{eqdichmass}
\begin{aligned}
\int_{\mathbb R^3}{|\varphi_1^n|^2\,dx}\xrightarrow[n]{}\alpha&\mbox{ and}& \int_{\mathbb R^3}{|\varphi_2^n|^2\,dx}\xrightarrow[n]{}1-\alpha,
\end{aligned}
\end{equation}
\begin{equation}\label{eqdichsupp}
\mathrm{dist}(\mathrm{supp}\,\varphi_1^n,\mathrm{supp}\,\varphi_2^n)\xrightarrow[n]{}+\infty.
\end{equation}
Moreover, in this case we have that 
\begin{equation}\label{eqlocalizfunct}
\liminf_{n\to+\infty}\mathcal{E}(\varphi_n)-\mathcal{E}(\varphi_1^n)-\mathcal{E}(\varphi_2^n)\ge 0\,,
\end{equation}
{  which implies $I\ge I_\alpha+I_{1-\alpha}$.}
\end{enumerate}
\end{lem}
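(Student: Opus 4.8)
The plan is to follow the classical concentration-compactness scheme of Lions, applied to the sequence of probability measures $\rho_n(x) = |\varphi_n(x)|^2$, and then to upgrade the measure-theoretic conclusions to the statements about $\mathcal E$ using the ad-hoc cut-off functions mentioned in the introduction. First I would recall that since $\{\varphi_n\}_n$ is $X$-bounded, Lemma \ref{lempropX} (or rather its proof applied uniformly) gives that $\{\varphi_n\}_n$ is bounded in $H^1(\mathbb R^3,\mathbb C^2)$ and that $|\varphi_n|^2 \le 1$ a.e. Setting $\rho_n = |\varphi_n|^2$, we have $\int_{\mathbb R^3}\rho_n\,dx = 1$, so the concentration function
\[
Q_n(R) = \sup_{y\in\mathbb R^3}\int_{B(y,R)}\rho_n\,dx
\]
is well-defined, nondecreasing in $R$, and bounded by $1$. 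By a diagonal argument one extracts a subsequence so that $Q_n(R) \to Q(R)$ for every rational $R$, hence for every $R$ by monotonicity, and one sets $\alpha = \lim_{R\to\infty}Q(R) = \lim_{R\to\infty}\lim_{n\to\infty}Q_n(R) \in [0,1]$. The three cases $\alpha = 0$, $\alpha = 1$, $\alpha\in(0,1)$ correspond to vanishing, compactness, and dichotomy respectively; the arguments that $\alpha=0$ gives (2) and $\alpha=1$ gives (1) are exactly as in Lions' Lemma I.1 and require no modification, since they involve only the mass density $\rho_n$.

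The substantive work is the dichotomy case $\alpha\in(0,1)$. Here one chooses, for a suitable sequence $R_n\to\infty$ and centers $y_n$, a smooth radial cut-off $\xi_n$ equal to $1$ on $B(y_n,R_n)$ and supported in $B(y_n,2R_n)$, with $|\nabla\xi_n|\le C/R_n$, and a complementary cut-off $\zeta_n$ supported outside $B(y_n,R_n')$ with $R_n' \gg R_n$, so that $\mathrm{dist}(\mathrm{supp}\,\xi_n,\mathrm{supp}\,\zeta_n)\to\infty$; one then sets $\varphi_1^n = \xi_n\varphi_n$ and $\varphi_2^n = \zeta_n\varphi_n$. The mass statements \eqref{eqdichmass} and the support-separation \eqref{eqdichsupp} follow from the choice of the radii and the definition of $\alpha$ via the concentration function; the $L^p$ statement \eqref{eqdichconvlp} for $2\le p<6$ follows because $\varphi_n - (\varphi_1^n+\varphi_2^n)$ is supported in an annulus carrying vanishing mass, together with the uniform $H^1$ bound and interpolation/Sobolev (the endpoint $p=6$ is excluded precisely because only $L^2$-mass, not full norm, vanishes on the annulus). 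The key point — and the one requiring the \emph{particular} cut-offs — is the energy lower bound \eqref{eqlocalizfunct}: one must show
\[
\int_{\mathbb R^3}\frac{|\bm\sigma\cdot\nabla\varphi_n|^2}{(1-|\varphi_n|^2)_+}\,dx \;\ge\; \int_{\mathbb R^3}\frac{|\bm\sigma\cdot\nabla\varphi_1^n|^2}{(1-|\varphi_1^n|^2)_+}\,dx + \int_{\mathbb R^3}\frac{|\bm\sigma\cdot\nabla\varphi_2^n|^2}{(1-|\varphi_2^n|^2)_+}\,dx - o(1),
\]
while the $L^4$ terms split with error $o(1)$ by \eqref{eqdichconvlp}.

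The main obstacle, then, is controlling the kinetic term near the set where $|\varphi_n|$ approaches $1$: the map $\varphi\mapsto \bm\sigma\cdot\nabla\varphi/\sqrt{(1-|\varphi|^2)_+}$ is genuinely nonlinear and the denominator can blow up, so one cannot simply estimate $|\nabla(\xi_n\varphi_n)|^2 \le (1+\epsilon)\xi_n^2|\nabla\varphi_n|^2 + C_\epsilon|\nabla\xi_n|^2|\varphi_n|^2$ and divide. The device is to choose the cut-offs so that on the transition annulus $|\varphi_n|$ is bounded away from $1$ — which is possible because on that annulus $|\varphi_n|^2$ is small in $L^1$, and then (using that $|\varphi_n|\le 1$ pointwise together with a local improvement from the finiteness of the $X$-norm) one gets $1-|\varphi_n|^2 \ge c > 0$ there, so that the denominators of $\varphi_1^n$ and $\varphi_2^n$ are comparable to those of $\varphi_n$ on the overlap and the cross terms in expanding $|\bm\sigma\cdot\nabla(\xi_n\varphi_n)|^2$ are $O(1/R_n)$. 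On the region where $\xi_n\equiv 1$ (resp. $\zeta_n\equiv 1$) the two sides agree exactly, and on the complement of $\mathrm{supp}\,\xi_n\cup\mathrm{supp}\,\zeta_n$ one simply discards a nonnegative term. Summing these contributions and passing to the liminf yields \eqref{eqlocalizfunct}; combined with the definition of $I_\alpha$, $I_{1-\alpha}$ and the mass normalizations \eqref{eqdichmass} (with a harmless rescaling to restore exact masses $\alpha$ and $1-\alpha$), this gives $I\ge I_\alpha + I_{1-\alpha}$, completing the proof of the lemma.
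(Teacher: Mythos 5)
Your overall scheme (the L\'evy concentration function for $|\varphi_n|^2$, the trichotomy, the decomposition $\varphi_1^n=\xi_n\varphi_n$, $\varphi_2^n=\zeta_n\varphi_n$, and the splitting of the quartic term via \eqref{eqdichconvlp}) is the same as the paper's, but there is a genuine gap at precisely the step you identify as the crucial one. You claim the cut-offs can be arranged so that $1-|\varphi_n|^2\ge c>0$ on the transition annulus, ``because $|\varphi_n|^2$ is small in $L^1$ there'' combined with ``a local improvement from the finiteness of the $X$-norm''. No such improvement exists: smallness of $\int_{\rm annulus}|\varphi_n|^2\,dx$ is perfectly compatible with $|\varphi_n|=1$ on a subset of the annulus of small positive measure, and $X$-boundedness allows this too, since finiteness of $\int |\bm\sigma\cdot\nabla\varphi_n|^2/(1-|\varphi_n|^2)_+\,dx$ only forces $\bm\sigma\cdot\nabla\varphi_n=0$ a.e.\ on the set $\{|\varphi_n|=1\}$ (cf.\ the proof of Lemma \ref{lempropX}); it gives no pointwise separation of $|\varphi_n|$ from $1$, even locally. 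Consequently your assertions that ``the denominators of $\varphi_1^n$ and $\varphi_2^n$ are comparable to those of $\varphi_n$ on the overlap'' and that the cross terms are $O(1/R_n)$ are unjustified for a generic cut-off satisfying only $|\nabla\xi_n|\le C/R_n$: the error term $\int |\nabla\xi_n|^2|\varphi_n|^2/(1-\xi_n^2|\varphi_n|^2)_+\,dx$ is not controlled where $\xi_n$ and $|\varphi_n|$ are simultaneously close to $1$, and with it both the $X$-boundedness of $\varphi_1^n,\varphi_2^n$ and the energy lower bound \eqref{eqlocalizfunct} are left unproved.

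The paper never needs any separation of $|\varphi_n|$ from $1$. It uses only the pointwise bound $|\varphi_n|\le 1$ of Lemma \ref{lempropX}, which gives $(1-\xi_n^2|\varphi_n|^2)_+\ge 1-\xi_n^2$, so the error term is at most $\int |\nabla\xi_n|^2\,|\varphi_n|^2/(1-\xi_n^2)\,dx$; the cut-offs $\xi,\zeta$ are then constructed explicitly (a double-exponential profile) so that $(\xi')^2/(1-\xi^2)$ is bounded on the transition region, whence after rescaling by $R_n$ this term is $O(R_n^{-2})\,\|\varphi_n\|_{L^2}^2\to 0$, and the cross term follows by Cauchy--Schwarz against the uniform $X$-bound. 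The lower bound \eqref{eqlocalizfunct} is then obtained by comparing with the intermediate denominator $(1-|\varphi_1^n|^2-|\varphi_2^n|^2)_+\ge (1-|\varphi_n|^2)_+$, valid because $\xi_n^2+\zeta_n^2\le 1$, and discarding the nonnegative remainder carrying the factor $1-\xi_n^2-\zeta_n^2$. To repair your argument, replace the unjustified pointwise claim by this mechanism: either verify that your chosen cut-offs satisfy $(\xi')^2/(1-\xi^2)\in L^\infty$ (this is exactly the ``ad hoc'' ingredient the paper insists on), or provide an actual substitute for the separation claim; as written, the proof of the dichotomy case does not go through.
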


\begin{proof}[Proof of Lemma \ref{lemcc}] Let $\{\varphi_n\}_n$ be a $X$-bounded sequence such that  $\int_{\mathbb R^3}{|\varphi_n|^2\,dx}=\nu$ for all $n\ge 0$. We remind that $X$-bounded means that there exists $C>0$ such that 
$$
\norm\varphi_n\norm^2_{L^2}+\int_{\mathbb{R}^3}{\frac{|\bm\sigma\cdot\nabla\varphi_n|^2}{(1-|\varphi_n|^2)_+}\,dx}\le C.
$$
Moreover, thanks to Lemma \ref{lempropX}, if $\{\varphi_n\}_n$ is a $X$-bounded sequence then $\{\varphi_n\}_n$ {  is bounded in $L^\infty$ (by the constant $1$)} and in $H^1({\mathbb R^3})$.
Then, along the lines of \cite{lionscc1}, we introduce the so-called Lévy concentration functions
\begin{align}
\label{eqlevyQ}&Q_n(R)=\sup\limits_{y\in \mathbb R^3}\int_{\vb x-y\vb<R}|\varphi_n|^2\,dx,\\
\label{eqlevyK}&K_n(R)=\sup\limits_{y\in \mathbb R^3}\int_{\vb x-y\vb<R}{\frac{|\bm\sigma\cdot\nabla\varphi_n|^2}{(1-|\varphi_n|^2)_+}\,dx}
\end{align}
for $R>0$. Note that $Q_n$ and $K_n$ are continuous non-decreasing functions on $[0,+\infty)$, such that for all $n\ge 0$ and for all $R>0$
$$
Q_n(R)+K_n(R)\le C
$$
since $\{\varphi_n\}_n$ is $X$-bounded. Then, up to a subsequence, we have for all $R>0$
\begin{align}
\label{eqlevyQconv}&Q_n(R)\xrightarrow[n]{} Q(R),\\
\label{eqlevyKconv}&K_n(R)\xrightarrow[n]{} K(R),
\end{align} 
where $Q$ and $K$ are nonnegative, non-decreasing functions. Clearly, we have that 
$$
\alpha=\lim_{R\to+\infty}Q(R) \in [0,1],
$$ 
and we denote $l=\lim_{R\to+\infty}K(R)$.

If $\alpha=0$, then the situation (\ref{itemccvanishing}) of the lemma arises as a direct consequence of Definition (\ref{eqlevyQ}). If $\alpha=1$, then \eqref{itemcccompactness} follows, see \cite{lionscc1} for details.
Assume that $\alpha\in(0,1)$, we have to show that (\ref{itemccdichotomy}) holds.

{  First of all, consider $\varepsilon>0$, small, and $ R_\varepsilon>0$ such that $Q(R_\varepsilon)=\alpha-\varepsilon$ and $K(R_\varepsilon)\le l-\varepsilon$. Then, for $n$ large enough,
 $$ Q_n(R_\varepsilon)-Q(R_\varepsilon)<1/n, \quad K_n(R_\varepsilon)-K(R_\varepsilon)<1/n\,,$$
and by definition of the L\'evy functions $Q_n$, extracting subsequences if necessary, there exists 
$y_n\in  \mathbb R^3$ such that
\begin{align*}
&\left\vb \int_{\vb x-y_n\vb<R_\varepsilon}|\varphi_n|^2\,dx-Q_n(R_\varepsilon)\right\vb\le \frac{1}{n},\\
&\left\vb\int_{\vb x-y_n\vb<R_\varepsilon}{\frac{|\bm\sigma\cdot\nabla\varphi_n|^2}{(1-|\varphi_n|^2)_+}\,dx-K_n(R_\varepsilon)}\right\vb\le\frac{1}{n}\,.
\end{align*}
Next define $R_n>R_\varepsilon$ such that 
$$
 \int_{R_\varepsilon<\vb x-y_n\vb<R_n}|\varphi_n|^2\,dx= \frac{3}{n}+\varepsilon\,.
$$
Necessarily, $R_n\to +\infty$ as $n\to +\infty$. Indeed, if $R_n\le M$ for some $M>0$, then $Q(M)>\alpha$, which is impossible. We then deduce that for $n$ large enough,
$$
\label{eqconverrQ}\int_{\frac{R_n}8\le\vb x-y_n\vb\le R_n}|\varphi_n|^2\,dx\le \frac{3}{n}+\varepsilon\,
$$}
Let $\xi$, $\zeta$ be cut-off functions: $\xi, \zeta\in \mathcal D (\mathbb R^3)$ such that 
\begin{align*}
&\xi(x)=\left\{\begin{aligned}&1&&|x|\le1\\&1-\exp\left(1{-\dfrac{1}{1-\exp\left({1-\frac{1}{2-\vb x\vb}}\right)}}\right)&&1< \vb x \vb<2\\&0&&|x|\ge2\end{aligned}\right.\\&\zeta(x)=\left\{\begin{aligned}&0&&|x|\le1\\&\exp\left({1-\dfrac{1}{1-\exp\left({1-\frac{1}{2-\vb x\vb}}\right)}}\right)&&1< \vb x \vb<2\\&1&&|x|\ge2\end{aligned}\right.,
\end{align*}
and let $\xi_\mu$, $\zeta_\mu$ denote $\xi\left(\frac{\cdot}{\mu}\right)$, $\zeta\left(\frac{\cdot}{\mu}\right)$. We define
\begin{align}\label{eqphi1dich}
\varphi_1^n(\cdot)=\xi_{\frac{R_n}{8}}(\cdot-y_n)\varphi_n(\cdot)=\xi_{\frac{R_n}{8},y_n}(\cdot)\varphi_n(\cdot)\\
\label{eqphi2dich}
\varphi_2^n(\cdot)=\zeta_{\frac{R_n}{2}}(\cdot-y_n)\varphi_n(\cdot)=\zeta_{\frac{R_n}{2},y_n}(\cdot)\varphi_n(\cdot)
\end{align} 
with $R_n\to+\infty$. (\ref{eqdichsupp}) follows easily from these definitions. Furthermore, \eqref{eqdichconvlp} and (\ref{eqdichmass}) are obtained in the following way:  
\begin{align*}
\lim\limits_{n\to+\infty}\int_{\mathbb R^3}\vb\varphi_n-(\varphi_1^n+\varphi_2^n)\vb^2\,dx&=\lim\limits_{n\to+\infty}\int_{\frac{R_n}{8}\le\vb x-y_n\vb\le R_n}\vb(1-\xi_{\frac{R_n}{8}}-\zeta_{\frac{R_n}{2}})\varphi_n\vb^2\,dx\\
&\le
\lim\limits_{n\to+\infty}\int_{\frac{R_n}{8}\le\vb x-y_n\vb\le R_n}\vb\varphi_n\vb^2\,dx \le \eps,
\end{align*}
 Now by taking a sequence of $\eps$ tending to $0$, and by taking a diagonal sequence of the functions $\varphi_n$, and calling it by the same name, we find
 $$\int_{\frac{R_n}{8}\le\vb x-y_n\vb\le R_n}\vb\varphi_n\vb^2\,dx \xrightarrow[n]{} 0,$$
and, since $\{\varphi_1^n\}_n$ and  $\{\varphi_2^n\}_n$ are bounded in $H^1(\mathbb{R}^3)$, we also obtain
\begin{equation*}\label{eqconvdico}
\lim\limits_{n\to+\infty}\|\varphi_n-(\varphi_1^n+\varphi_2^n)\|_{L^p}\xrightarrow[n]{} 0,
\end{equation*}
for $2\le p< 6$.
Next, we have to prove that  $\{\varphi_1^n\}_{n\ge n_0}$ and  $\{\varphi_2^n\}_{n\ge n_0}$ are $X$-bounded. To this purpose, we show that 
\begin{equation}\label{eqphi1energylimit}
\lim\limits_{n\to+\infty}\int_{\mathbb R^3}{\frac{|\bm\sigma\cdot\nabla\varphi_1^n|^2}{(1-|\varphi_1^n|^2)_+}\,dx}-\int_{\mathbb R^3}{\frac{\xi^2_{\frac{R_n}{8},y_n}|\bm\sigma\cdot\nabla\varphi_n|^2}{(1-|\varphi_1^n|^2)_+}\,dx}=0\,
\end{equation}
and 
\begin{equation}\label{eqphi2energylimit}
\lim\limits_{n\to+\infty}\int_{\mathbb R^3}{\frac{|\bm\sigma\cdot\nabla\varphi_2^n|^2}{(1-|\varphi_2^n|^2)_+}\,dx}-\int_{\mathbb R^3}{\frac{\zeta^2_{\frac{R_n}{2},y_n}|\bm\sigma\cdot\nabla\varphi_n|^2}{(1-|\varphi_2^n|^2)_+}\,dx}=0\,
\end{equation}

Indeed, if (\ref{eqphi1energylimit}) and \eqref{eqphi2energylimit} hold, we obtain that for all $\eps>0$, there exists $n_0\ge 0$ such that for all $n\ge n_0$,  we have
{  \begin{align*}
\int_{\mathbb R^3}{\frac{|\bm\sigma\cdot\nabla\varphi^n_1|^2}{(1-|\varphi^n_1|^2)_+}\,dx}&\le\int_{\mathbb R^3}{\frac{\xi^2_{\frac{R_n}{8},y_n}|\bm\sigma\cdot\nabla\varphi_n|^2}{(1-|\varphi^n_1|^2)_+}\,dx}+o(1)_{n\to +\infty}\\
&\qquad \le \int_{\mathbb R^3}{\frac{|\bm\sigma\cdot\nabla\varphi_n|^2}{(1-|\varphi_n|^2)_+}\,dx}+o(1)_{n\to +\infty}
\le C+o(1)_{n\to +\infty}, 
\end{align*}
and
\begin{align*}
\int_{\mathbb R^3}{\frac{|\bm\sigma\cdot\nabla\varphi_2^n|^2}{(1-|\varphi^n_2|^2)_+}\,dx}&\le\int_{\mathbb R^3}{\frac{\zeta^2_{\frac{R_n}{2},y_n}|\bm\sigma\cdot\nabla\varphi_n|^2}{(1-|\varphi^n_2|^2)_+}\,dx}+o(1)_{n\to +\infty}\\ &\qquad \le \int_{\mathbb R^3}{\frac{|\bm\sigma\cdot\nabla\varphi_n|^2}{(1-|\varphi_n|^2)_+}\,dx}+o(1)_{n\to +\infty}
\le C+o(1)_{n\to +\infty}.
\end{align*}

To prove (\ref{eqphi1energylimit})  we proceed as follows. We remark that 
$$
\int_{\mathbb R^3}{\frac{|\bm\sigma\cdot\nabla\varphi_1^n|^2}{(1-|\varphi_1^n|^2)_+}\,dx}-\int_{\mathbb R^3}{\frac{\xi^2_{\frac{R_n}{8},y_n}|\bm\sigma\cdot\nabla\varphi_n|^2}{(1-|\varphi_1^n|^2)_+}\,dx}= A_n+B_n\,,$$
where
\begin{align*}
A_n:=\int_{\mathbb R^3}{\frac{|\bm\sigma\cdot(\nabla\xi_{\frac{R_n}{8},y_n})\varphi_n|^2}{(1-|\varphi_1^n|^2)_+}\,dx}
&=\int_{\frac{R_n}{8}\le\vb x-y_n\vb\le\frac{R_n}{4}}{\frac{|\bm\sigma\cdot(\nabla\xi_{\frac{R_n}{8},y_n})\varphi_n|^2}{(1-|\varphi_1^n|^2)_+}\,dx}\\
&\le \int_{\frac{R_n}{8}\le\vb x-y_n\vb\le\frac{R_n}{4}}{\frac{|\bm\sigma\cdot(\nabla\xi_{\frac{R_n}{8},y_n})\varphi_n|^2}{1-\xi_{\frac{R_n}{8},y_n}^2}\,dx}:=C_n\,,
\end{align*}
and 
$$
|B_n|\le 2 \left( C_n\right)^\frac12\left(\int_{\mathbb R^3}{\frac{|\bm\sigma\cdot\nabla\varphi_n|^2}{(1-|\varphi_n|^2)_+}\,dx}\right)^\frac12\,.
$$

Let us now prove that $C_n$ tends to $0$ as $n$ goes to $+\infty$.} Using spherical coordinates, we obtain
\begin{align*}
C_n&\le \int_{\frac{R_n}{8}}^{\frac{R_n}{4}}\int_{0}^\pi\int_0^{2\pi}{\frac{|\bm(\sigma\cdot\bm {e}_r)\, \varphi_n(s,\theta,\phi)|^2\left(\xi'_{\frac{R_n}{8}}(s)\right)^2}{1-\xi_{\frac{R_n}{8}}^2(s)}\,s^2\,\sin \theta \,ds\, d\theta\, d\phi}\\
&\le \frac{{  64}}{R_n^2} \int_{\frac{R_n}{8}}^{\frac{R_n}{4}}\int_{0}^\pi\int_0^{2\pi}{\frac{|\varphi_n(s,\theta,\phi)|^2\left(\xi'\left(\frac{8}{R_n}s\right)\right)^2}{1-\xi^2\left(\frac{8}{R_n}s\right)}\,s^2\,\sin \theta \,ds\, d\theta\, d\phi}\\
&\le \frac{{  64}}{R_n^2} \max_{1\le r \le 2}{\frac{\left(\xi'\left(r\right)\right)^2}{1-\xi^2\left(r\right)}}
\int_{0}^{+\infty}\int_{0}^\pi\int_0^{2\pi}{|\varphi_n(s,\theta,\phi)|^2\,s^2\,\sin \theta \,ds\, d\theta\, d\phi}=O\left(\frac{1}{R_n^2}\right)
\end{align*}
since $\max\limits_{1\le r \le 2}{\frac{\left(\xi'\left(r\right)\right)^2}{1-\xi^2\left(r\right)}}\le C$. Indeed, since $\xi^2\left(r\right)=1$ if and only if $r=1$, ${\frac{\left(\xi'\left(r\right)\right)^2}{1-\xi^2\left(r\right)}}$ is a continuous function on $(1,2)$. Moreover, by a straightforward calculation, we obtain $\lim\limits_{r\to 1^+}\frac{\left(\xi'\left(r\right)\right)^2}{1-\xi^2\left(r\right)}=0=\lim\limits_{r\to 2^-}\frac{\left(\xi'\left(r\right)\right)^2}{1-\xi^2\left(r\right)}$. Hence, we can conclude, that $\frac{\left(\xi'\left(r\right)\right)^2}{1-\xi^2\left(r\right)}$ is bounded in $[1,2]$.
As a conclusion, since $R_n\to+\infty$, we obtain
\begin{equation*}
\lim\limits_{n\to+\infty}\int_{\mathbb R^3}{\frac{|\bm\sigma\cdot\nabla\varphi_1^n|^2}{(1-|\varphi^n_1|^2)_+}\,dx}-\int_{\mathbb R^3}{\frac{\xi^2_{\frac{R_n}{8},y_n}|\bm\sigma\cdot\nabla\varphi_n|^2}{(1-|\varphi_1^n|^2)_+}\,dx}=0.
\end{equation*}
With the same argument, we prove (\ref{eqphi2energylimit}).

Finally, it remains to show that
$$
\liminf_{n\to+\infty}\mathcal{E}(\varphi_n)-\mathcal{E}(\varphi_1^n)-\mathcal{E}(\varphi_2^n)\ge 0.
$$
First of all, using the definitions (\ref{eqphi1dich}) and (\ref{eqphi2dich}), we obtain
$$
\lim_{n\to+\infty}\int_{\mathbb{R}^3}{\frac{|\bm\sigma\cdot\nabla\varphi_n|^2}{(1-|\varphi_n|^2)_+}\,dx}\ge\lim_{n\to+\infty}\int_{\mathbb{R}^3}{\frac{|\bm\sigma\cdot\nabla\varphi_n|^2}{(1-|\varphi_1^n|^2-|\varphi_2^n|^2)_+}\,dx}.
$$

Next, we remark that 
\begin{align*}
\int_{\mathbb{R}^3}\frac{|\bm\sigma\cdot\nabla\varphi_n|^2}{(1-|\varphi_1^n|^2-|\varphi_2^n|^2)_+}&\,dx-\int_{\mathbb R^3}{\frac{|\bm\sigma\cdot\nabla\varphi_1^n|^2}{(1-|\varphi^n_1|^2)_+}\,dx}-\int_{\mathbb R^3}{\frac{|\bm\sigma\cdot\nabla\varphi_2^n|^2}{(1-|\varphi^n_2|^2)_+}\,dx}\\
=&\int_{\mathbb{R}^3}{\frac{|\bm\sigma\cdot\nabla\varphi_n|^2}{(1-|\varphi_1^n|^2-|\varphi_2^n|^2)_+}\,dx}-\int_{\mathbb R^3}{\frac{\xi^2_{\frac{R_n}{8},y_n}|\bm\sigma\cdot\nabla\varphi_n|^2}{(1-|\varphi_1^n|^2)_+}\,dx}\\
&-\int_{\mathbb R^3}{\frac{\zeta^2_{\frac{R_n}{2},y_n}|\bm\sigma\cdot\nabla\varphi_n|^2}{(1-|\varphi_2^n|^2)_+}\,dx}+o(1)_{n\to\infty}\\
=&\int_{\mathbb{R}^3}{\frac{\left(1-\xi^2_{\frac{R_n}{8},y_n}-\zeta^2_{\frac{R_n}{2},y_n}\right)|\bm\sigma\cdot\nabla\varphi_n|^2}{(1-|\varphi_1^n|^2-|\varphi_2^n|^2)_+}\,dx}+o(1)_{n\to\infty}\\
{  \ge}  &\; {  o(1)_{n\to\infty}.}
\end{align*}

As a conclusion,
\begin{align*}
\lim_{n\to+\infty}\int_{\mathbb{R}^3}{\frac{|\bm\sigma\cdot\nabla\varphi_n|^2}{(1-|\varphi_n|^2)_+}\,dx}\ge&\lim_{n\to+\infty}\int_{\mathbb R^3}{\frac{|\bm\sigma\cdot\nabla\varphi_1^n|^2}{(1-|\varphi^n_1|^2)_+}\,dx}\\
&+\lim\limits_{n\to+\infty}\int_{\mathbb R^3}{\frac{|\bm\sigma\cdot\nabla\varphi_2^n|^2}{(1-|\varphi^n_2|^2)_+}\,dx},
\end{align*}
and, using (\ref{eqdichconvlp}) and the localization properties of $\varphi_1^n$ and $\varphi_2^n$, we have
\begin{equation*}
I=\lim_{n\to+\infty}\mathcal{E}(\varphi_n)\ge\liminf_{n\to+\infty}\mathcal{E}(\varphi_1^n)+\liminf_{n\to+\infty}\mathcal{E}(\varphi_2^n) \; {  \ge  I_\alpha + I_{1-\alpha}}\,.
\end{equation*}
\end{proof}

\noindent {\it Proof of Theorem \ref{thmcc}.} Assume that $I<0$. By Lemma \ref{borne-sm}, any minimizing sequence $\{\varphi_n\}_{n}$ is $X$-bounded, and then we can use Lemma \ref{lemcc} to it. It is easy to rule out vanishing and dichotomy whenever $I<0$.

Vanishing cannot occur. Indeed, 
If vanishing occurs, then, up to a subsequence, $\forall R<+\infty$ we have
\begin{equation}\label{eqvanishingcc}
\lim_{n\to+\infty}\sup_{y\in\mathbb R^3}\int_{B(y,R)}{|\varphi_{n}|^2}=0.
\end{equation}
This implies that $\varphi_n$ converges strongly in $L^p(\mathbb R^3)$ for $2<p<6$ and, as a consequence, $I\ge 0$. Clearly, this contradicts $I<0$.\\
Moreover, if dichotomy occurs, we have
\begin{align*}
I=\lim_{n\to+\infty}\mathcal{E}(\varphi_n)\ge\liminf_{n\to+\infty}\mathcal{E}(\varphi_1^n)+\liminf_{n\to+\infty}\mathcal{E}(\varphi_2^n)\ge I_{\alpha}+I_{1-\alpha}
\end{align*}
which contradicts Lemma \ref{inegalites-strictes}, since $I<0$.

Hence, for $n$ large enough, there exists $\{y_n\}_n\in \mathbb R^3$ such that $\forall\varepsilon>0$, $\exists R<+\infty$,
$$
\int_{B(y_n,R)}{|\varphi_n|^2}\ge 1-\varepsilon.
$$

We denote by $\tilde\varphi_n(\cdot)=\varphi_n(\cdot+y_n)$. Since $\{\tilde\varphi_n\}_n$ is bounded in $H^1$, $\{\tilde\varphi_n\}_n$ converges weakly in $H^1$, almost everywhere on $\mathbb{R}^3$ and in $L^p_{loc}$ for $2\le p<6$ to some $\tilde\varphi$. In particular, as a consequence of weak convergence in $H^1$, $\bm\sigma\cdot\nabla\tilde\varphi_n$ converges weakly to $\bm\sigma\cdot\nabla\tilde\varphi$ in $L^2$.
Moreover, thanks to the concentration-compactness argument, $\{\tilde\varphi_n\}_n$ converges strongly in $L^2$ and in $L^p$ for $2\le p<6$. 

\begin{lem}\label{lemwlsc} Let $\{f_n\}_n$ and $\{g_n\}_n$ be two sequences of functions such that $f_n:\mathbb R^3\to \mathbb R_+$, $g_n:\mathbb R^3\to\mathbb C^2$ , $f_n$ converges to $f$ a.e., $g_n$ converges weakly to $g$ in $L^2$ and there exists a constant $C$, that does not depend on $n$, such that $\int_{\mathbb R^3}f_n\vb g_n\vb^2\,dx\le C$. Then
$$
\int_{\mathbb R^3}f\vb g\vb^2\,dx\le \liminf_{n\to+\infty}\int_{\mathbb R^3}f_n\vb g_n\vb^2\,dx.
$$
\end{lem}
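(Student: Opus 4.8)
The plan is to reduce the statement to the weak lower semicontinuity of the $L^2$-norm by truncating $f_n$ at a level $M$, and then to remove the truncation by monotone convergence. \textbf{Step 1 (Truncation).} For $M>0$ put $f_n^M=\min(f_n,M)$ and $f^M=\min(f,M)$. Since $t\mapsto\min(t,M)$ is continuous and $f_n\to f$ a.e., one has $f_n^M\to f^M$ a.e., with $0\le f_n^M\le M$. Because $f_n^M\le f_n$, we get $\int_{\mathbb R^3}f_n^M|g_n|^2\,dx\le\int_{\mathbb R^3}f_n|g_n|^2\,dx$ for every $n$, so it suffices to prove, for each fixed $M$, that
\[
\int_{\mathbb R^3}f^M|g|^2\,dx\le\liminf_{n\to+\infty}\int_{\mathbb R^3}f_n^M|g_n|^2\,dx .
\]

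\textbf{Step 2 (Identifying a weak limit).} Set $h_n:=(f_n^M)^{1/2}g_n\in L^2(\mathbb R^3,\mathbb C^2)$; then $\|h_n\|_{L^2}^2=\int_{\mathbb R^3}f_n^M|g_n|^2\,dx\le C$, so $\{h_n\}_n$ is bounded in $L^2$. I claim $h_n\rightharpoonup(f^M)^{1/2}g$ weakly in $L^2$. Indeed, for $\phi\in\mathcal D(\mathbb R^3,\mathbb C^2)$ one has $(f_n^M)^{1/2}\phi\to(f^M)^{1/2}\phi$ a.e. and $|(f_n^M)^{1/2}\phi|\le M^{1/2}|\phi|\in L^2$, so by dominated convergence $(f_n^M)^{1/2}\phi\to(f^M)^{1/2}\phi$ strongly in $L^2$. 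Since $g_n\rightharpoonup g$ weakly in $L^2$ and the pairing of a weakly convergent sequence with a strongly convergent one converges,
\[
\int_{\mathbb R^3}h_n\cdot\phi\,dx=\int_{\mathbb R^3}g_n\cdot\big((f_n^M)^{1/2}\phi\big)\,dx\xrightarrow[n]{}\int_{\mathbb R^3}g\cdot\big((f^M)^{1/2}\phi\big)\,dx=\int_{\mathbb R^3}\big((f^M)^{1/2}g\big)\cdot\phi\,dx .
\]
As $\{h_n\}_n$ is bounded in $L^2$ and its limit is identified on the dense set $\mathcal D(\mathbb R^3,\mathbb C^2)$, the claim follows.

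\textbf{Step 3 (Conclusion).} By weak lower semicontinuity of the $L^2$-norm and Step 1,
\[
\int_{\mathbb R^3}f^M|g|^2\,dx=\|(f^M)^{1/2}g\|_{L^2}^2\le\liminf_{n\to+\infty}\|h_n\|_{L^2}^2=\liminf_{n\to+\infty}\int_{\mathbb R^3}f_n^M|g_n|^2\,dx\le\liminf_{n\to+\infty}\int_{\mathbb R^3}f_n|g_n|^2\,dx .
\]
Finally $0\le f^M\uparrow f$ as $M\to+\infty$, so the monotone convergence theorem gives $\int_{\mathbb R^3}f^M|g|^2\,dx\to\int_{\mathbb R^3}f|g|^2\,dx$, and the stated inequality (and, in passing, $\int_{\mathbb R^3}f|g|^2\,dx\le C$) follows.

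The argument is almost entirely routine; the one delicate point is Step 2, the identification of the weak limit of $(f_n^M)^{1/2}g_n$. One cannot simply multiply the weak limit $g$ by the pointwise limit of the coefficients, so instead one tests against smooth functions and uses that the truncation makes $(f_n^M)^{1/2}\phi$ strongly $L^2$-convergent by dominated convergence — which is precisely why the cutoff $\min(\cdot,M)$, and then the monotone passage to the limit, are introduced. If one prefers to avoid truncation altogether, the same conclusion can be obtained by combining Egorov's theorem (uniform convergence of $f_n$ off a set of small measure) with Fatou's lemma, but the truncation route is the cleaner of the two.
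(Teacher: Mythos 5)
Your proof is correct, and it shares the paper's overall skeleton: truncate the weight at a fixed level (your $\min(f_n,M)$ is exactly the paper's $T_k(f_n)$), prove the lower-semicontinuity inequality for the truncated weight, then remove the truncation by monotone convergence together with the uniform bound $C$. The truncated step, however, is carried out by a different mechanism. You transfer the square root of the weight onto $g_n$, prove $h_n=(f_n^M)^{1/2}g_n\rightharpoonup (f^M)^{1/2}g$ in $L^2$ by testing against $\mathcal D(\mathbb R^3,\mathbb C^2)$ (dominated convergence making $(f_n^M)^{1/2}\phi$ strongly $L^2$-convergent, plus the uniform bound $\|h_n\|_{L^2}^2\le C$ and density), and then invoke weak lower semicontinuity of the $L^2$-norm. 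The paper instead keeps $g_n$ as the weakly convergent object and expands $0\le\liminf_n\int_{\mathbb R^3} T_k(f_n)\,|g_n-g|^2\,dx$, which requires first showing that $T_k(f_n)g\to T_k(f)g$ strongly in $L^2$ (weak convergence of $T_k(f_n)g$ together with convergence of the norms, both by dominated convergence) before using $g_n\rightharpoonup g$ to handle the cross terms. Your route buys a slightly cleaner conclusion, resting on one standard fact (weak lsc of the norm) rather than an expansion of the square; the paper's route avoids the duality/density identification of a weak limit by only ever multiplying the fixed limit function $g$ by the truncated weights. The one delicate point in your argument, the identification of the weak limit of $(f_n^M)^{1/2}g_n$ in Step 2, is exactly where the work lies and you handle it correctly.
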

\begin{proof} Given a function $h:\mathbb R^3\to \mathbb R_+$, let $T_k$ be the function defined by  
$$
T_k(h)(x)=\left\{\begin{aligned}&h(x) &\mbox{if }h(x) \le k\\&k &\mbox{if } h(x) > k\end{aligned}\right.
$$
for all $k\in [0,\infty)$. Hence, the following properties are satisfied for all $k\in [0,\infty)$:
\begin{align}
\label{eqconvaeTk} &T_k(f_n)\xrightarrow[n]{}T_k(f)\quad\mbox{a.e. in } \mathbb R^3,\\
\label{eqconvL1Tkg2} &T_k(f_n)\vb g\vb^2\xrightarrow[n]{}T_k(f)\vb g\vb^2\quad \mbox{in } L^1,\\
\label{eqconvL2weakTkg} &T_k(f_n)g \underset{n}{\rightharpoonup}T_k(f)g\quad\mbox{in } L^2,\\
\label{eqconvnormL2Tkg} &\norm T_k(f_n) g\norm_{L^2}\xrightarrow[n]{}\norm T_k(f)g\norm_{L^2}, &
\end{align}
where to obtain (\ref{eqconvL1Tkg2}) and (\ref{eqconvnormL2Tkg}), we use Lebesgue's dominated convergence theorem. Moreover, as a consequence of (\ref{eqconvL2weakTkg}) and (\ref{eqconvnormL2Tkg}), we have
\begin{equation}\label{eqconvL2Tkg} 
T_k(f_n)g \xrightarrow[n]{}T_k(f)g\quad\mbox{in } L^2.
\end{equation} 
Next, we have
\begin{align*}
0\le&\liminf_{n\to+\infty}\int_{\mathbb R^3}T_k(f_n)\vb g_n-g\vb^2\,dx=\liminf_{n\to+\infty}\int_{\mathbb R^3}T_k(f_n)\vb g_n\vb^2\,dx\\
&+\liminf_{n\to+\infty}\int_{\mathbb R^3}T_k(f_n)\vb g\vb^2\,dx-\liminf_{n\to+\infty}\left(\int_{\mathbb R^3}T_k(f_n)\,{  \overline{g}_n\cdot g}\,dx+\int_{\mathbb R^3}T_k(f_n)\,{  {g}_n\cdot \overline{g}}\,dx\right)\\
=&\liminf_{n\to+\infty}\int_{\mathbb R^3}T_k(f_n)\vb g_n\vb^2\,dx+\int_{\mathbb R^3}T_k(f)\vb g\vb^2\,dx-2\int_{\mathbb R^3}T_k(f)\vb g\vb^2\,dx
\end{align*}
thanks to (\ref{eqconvL1Tkg2}), (\ref{eqconvL2Tkg}) and the fact that $g_n$ converges weakly to $g$ in $L^2$. As a consequence,
\begin{equation}\label{eqliminfk}
\int_{\mathbb R^3}T_k(f)\vb g\vb^2\,dx\le \liminf_{n\to+\infty}\int_{\mathbb R^3}T_k(f_n)\vb g_n\vb^2\,dx
\end{equation}
Since 
$$\liminf_{n\to+\infty}\int_{\mathbb R^3}T_k(f_n)\vb g_n\vb^2\,dx\le\liminf_{n\to+\infty}\int_{\mathbb R^3}f_n\vb g_n\vb^2\,dx\le C,
$$
we can pass to the limit for $k$ that goes to $+\infty$ in (\ref{eqliminfk}) and we obtain 
$$
\int_{\mathbb R^3}f\vb g\vb^2\,dx\le \liminf_{n\to+\infty}\int_{\mathbb R^3}f_n\vb g_n\vb^2\,dx.
$$
\end{proof}
By applying Lemma (\ref{lemwlsc}) to $f_n=\frac{1}{(1-\vb\tilde\varphi_n\vb^2)}_+$ and $g_n=\vb\bm\sigma\cdot\nabla\tilde\varphi_n\vb$, we obtain 
$$
\int_{\mathbb{R}^3}{\frac{|\bm\sigma\cdot\nabla\tilde\varphi|^2}{(1-|\tilde\varphi|^2)_+}\,dx}\le \liminf_{n\to+\infty}\int_{\mathbb{R}^3}{\frac{|\bm\sigma\cdot\nabla\tilde\varphi_n|^2}{(1-|\tilde\varphi_n|^2)_+}\,dx}.
$$
Hence, $\tilde\varphi\in X$, $\int_{\mathbb R^3}{|\tilde\varphi|^2\,dx}=1$, and
$$
\mathcal E (\tilde\varphi)\le \liminf_{n\to+\infty} \mathcal E (\tilde\varphi_n) \le \mathcal E (\tilde\varphi).
$$
As a conclusion, the minimum of $I$ is achieved by $\tilde\varphi$.

Finally, it remains to prove that there exists $a_0>0$ such that for all $a>a_0$ we have $I<0$. 

It is clear that $I<0$ for $a$ large enough. Since $I$ is non-increasing with respect to $a$, we may denote by $a_0$ the least positive constant such that $I<0$ for $a>a_0$. We have to prove that $a_0>0$ or in other words $I=0$ for $a$ small enough. Using Sobolev and H\"older inequalities, we find, for $\varphi\in X$ such that $\int_{\mathbb R^3}\vb \varphi\vb^2\,dx=1$,
$$
\mathcal E(\varphi)\ge \frac{1}{S^2}\left(\int_{\mathbb R^3}\vb \varphi\vb^6\,dx\right)^{1/3}-\frac{a}{2}\left(\int_{\mathbb R^3}\vb \varphi\vb^6\,dx\right)^{1/3}.
$$
Hence, if $a\le \frac{2}{S^2}$, $I=0$. This implies $a_0> \frac{2}{S^2}$. According to \cite{talenti} the best constant for the Sobolev inequality 
$$
\norm u \norm_{L^q(\mathbb R^m)}\le C \norm \nabla u \norm_{L^p(\mathbb R^m)}  
$$
with $1<p<m$ and $q=\frac{mp}{(m-p)}$ is given by 
$$
C=\pi^{-1/2}m^{-1/p}\left(\frac{p-1}{m-p}\right)^{1-1/p}\left(\frac{\Gamma(1+m/2)\Gamma(m)}{\Gamma(m/p)\Gamma(1+m-m/p)}\right)^{1/m}.
$$
In particular,
$$
S=\frac{1}{\sqrt{3\pi}}\left(\frac{4}{\sqrt{\pi}}\right)^{1/3},
$$
and
$$
\frac{2}{S^2}=\frac{3\pi^{4/3}}{2^{1/3}}\approx10.96.
$$

To obtain an upper estimate for $a_0$, we consider the following test function
$$
\bar\varphi(x)=\left(\begin{array}{c}\bar f_{R}(\vb x\vb)\\0\end{array}\right)
$$
where $\bar f_{R}(\vb x\vb)=\bar f\left(\frac{\vb x\vb}{R}\right)$, 
$$
\bar f(\vb x\vb)=\left\{\begin{aligned}&\cos(\vb x\vb)&&|x|\le\frac{\pi}{2}\\&0&&|x|>\frac{\pi}{2}\end{aligned}\right.
$$
and $R\in(0,1)$ is such that $\int \vb \bar f_R\vb^2\,dx=1$. This implies
$$
R=\left(\frac{2}{\pi}\right)^{2/3}\left(\frac{3}{\pi^2-6}\right)^{1/3}\,.
$$
Next, we denote by $\bar a$ the positive constant such that $\mathcal E(\varphi)=0$. By definition,
$$
\bar a= \frac{2\int_{\mathbb{R}^3}{\frac{|\bm\sigma\cdot\nabla\bar\varphi|^2}{(1-|\bar\varphi|^2)_+}\,dx}}{\int \vb \bar \varphi\vb^4}= \frac{2\int_{\mathbb{R}^3}{\frac{|\nabla\bar f_R|^2}{1-|\bar f_R|^2}\,dx}}{\int \vb \bar f_R\vb^4}\,,
$$
and, by a straightforward calculation, we obtain 
$$
\int_{\mathbb{R}^3}{\frac{|\nabla\bar f_R|^2}{1-|\bar f_R|^2}\,dx}=\frac{\pi^{4}}{6}R=\frac{\pi^{10/3}}{3^{2/3}(2(\pi^2-6))^{1/3}},
$$
$$
\int \vb \bar f_R\vb^4=\frac{\pi^2(2\pi^2-15)}{32}R^3=\frac{3(2\pi^2-15)}{8(\pi^2-6)}.
$$
As a consequence,
$$
\bar{a}=\frac{8\pi^{10/3}\left(\frac{2}{3}(\pi^2-6)\right)^{2/3}}{3(2\pi^2-15)}\approx 48.06
$$
Since the energy functional $\mathcal E$ is decreasing in $a$, if $a>\bar a$ then $I\le \mathcal E(\bar \varphi)<0$. As a conclusion, $a_0\le \bar a +\varepsilon$ for all $\varepsilon>0$. 
\endproof

\appendix\section{}\label{sectionnew}
\subsection{}\label{A1}

We begin this section by proving  that if 
$(\varphi, \chi)$ a solution of (\ref{eqdiracnrl}) with  $\varphi\in H^1(\mathbb R^3)$ of the form (\ref{eqphireal}), then $|\varphi|^2\leq 1$ a.e. in $\mathbb R^3$. As we saw before, $\varphi$ is a solution of 
\begin{equation}\label{ELeqn}
-\bm\sigma\cdot\nabla\left(\frac{\bm\sigma\cdot\nabla\varphi}{1-|\varphi|^2}\right)+\frac{|\bm\sigma\cdot\nabla\varphi|^2}{(1-|\varphi|^2)^2}\varphi-a|\varphi|^2\varphi+b\varphi=0,
\end{equation}
or equivalently,
$$
\frac{\Delta\varphi}{|\varphi|^2-1}-\frac{|\bm\sigma\cdot\nabla\varphi|^2}{(|\varphi|^2-1)^2}\varphi-a|\varphi|^2\varphi+b\varphi=0\,,$$
or still,
$$\Delta\varphi-\frac{|\nabla\varphi|^2}{|\varphi|^2-1}\varphi-(a|\varphi|^2\varphi-b\varphi)(|\varphi|^2-1)=0\,,
$$
because for functions $\varphi$ of the form (\ref{eqphireal}), 
$$ |\sigma\cdot\nabla\varphi|^2 = |\nabla\varphi|^2\quad\mbox{and}\quad \sigma\cdot(\nabla\varphi\wedge\nabla\varphi)=0\quad\mbox{a. e.}$$
 For any $K>1$, we define the truncation function $T_K(s)$ by $T_K(s)=s$ if $1 < s < K$, and $T_K(s)=0$ otherwise. Multiplying the above equation by $\varphi \,T_K(|\varphi|^2)\in L^2(\mathbb R^3)$, we obtain
\begin{align}\label{eqestimatesol}
-\int_{\mathbb R^3}|\nabla\varphi|^2\, T_K(|\varphi|^2)&-\int_{\mathbb R^3}(\nabla\varphi\cdot\varphi)\,\nabla T_K(|\varphi|^2)-\int_{\mathbb R^3}\frac{|\nabla\varphi|^2}{|\varphi|^2-1}|\varphi|^2\, T_K(|\varphi|^2)\nonumber\\
&-\int_{\mathbb R^3}(a|\varphi|^2-b)(|\varphi|^2-1)|\varphi|^2\,T_K(|\varphi|^2)=0.
\end{align}
Moreover, for all $K>1$, 
$$
\nabla T_K(|\varphi|^2)=\left\{
\begin{aligned}&2\varphi\cdot\nabla\varphi&& \;1<|\varphi|^2<K\\
&0&&\;|\varphi|^2\le 1 \;\mbox{ or }\; |\varphi|^2\ge K
\end{aligned}
\right..
$$
Therefore, if $a-b>0$ the l.h.s of (\ref{eqestimatesol}) is negative  and this implies that either $|\varphi|^2\leq 1$ or  $|\varphi|^2\geq K$ a.e.  As a conclusion,  taking the limit $K\to +\infty$,  if $a-b>0$ then any solution $\varphi$ of \eqref{ELeqn} of the form \eqref{eqphireal} satisfies $|\varphi|^2 \le 1$ a.e. in $\mathbb R^3$, and in the equation \eqref{ELeqn} we can replace the term $(1-|\varphi|^2)$ by $(1-|\varphi|^2)_+$ without changing its solution set. The same happens for solutions of the form \eqref{eqsolrad}.

\subsection{}\label{A2}

Let us next prove that the functional ${\mathcal F}$,  defined by \eqref{eqfunctF}, is not bounded from below. Consider the function $\xi$ introduced in the proof of Lemma \ref{lemcc}. Let us denote $A:=\int_{\mathbb R^3} \vb\xi(x)\vb^2\,dx$. 

Then, let us define the radially symmetric function $$ f(r)= \left\{ \begin{array}{ll} e^{(r-\sqrt{\ln 2})^2}\,,\quad 0\le r<\sqrt{\ln 2}\,, \\ \bar\xi(r+1-\sqrt{\ln 2}),\quad r\ge \sqrt{\ln 2}\,,\end{array} \right.$$
where $\bar\xi(|x|)=\xi(x)$ for all $x$, and take $a:=\int_{\mathbb R^3}  f(|x|)^2\,dx$. Note that $\mbox{supp}(f)\subset [0, 1+\sqrt{\ln 2}]\;$ and $\max\limits_{0\le r \le 1+\sqrt{\ln 2}}{\frac{\left(f'\left(r\right)\right)^2}{1-f^2\left(r\right)}}\le C$, for some constant $C>0$. 

Next, for all integers $n>0$,  define the rescaled functions  $\xi_n(x):=n^{3/2}\xi(nx)$.   This change of variables leaves invariant the $L^2(\mathbb R^3)$ norm. Then for $n$ large, consider the function
$$g_n(x):=\max_{\mathbb R^3}\{\xi_n, f\}\,.$$
Note that the measure of the set $\{x\in \mathbb R^3\;;\; g_n=\xi_n\}$ tends to $0$ as $n$ goes to $+\infty$.
This function satisfies $\int_{\mathbb R^3} \vb g_n(x)\vb^2\,dx=A+a+o(1)$, as $n$ goes to $+\infty$. In order to normalize it in the $L^2$ norm, let us finally define the rescaled function $g_n^R(x):=g_n\left(\frac{x}{R}\right)$, $R>0$ and choose $R_n$ such that $\int_{\mathbb R^3} \vb g^{R_n}_n(x)\vb^2\,dx=1$. As $n$ goes to $+\infty$, $R_n\to \bar R:=(A+a)^{-1/3}>0$. We compute now the energy ${\mathcal F}$ of the vector function $\varphi^{R_n}_n$ defined by
$$\varphi_n^{R_n}(x)=\left(\begin{array}{c}g^{R_n}_n(x)\\0\end{array}\right)\,.$$
We find
$$
\mathcal F(\varphi^{R_n}_n)=\int_{\xi_n^{R_n}\ge f^{R_n}}\frac{\left(\left(\xi_n^{R_n}\right)'(r)\right)^2}{1-(\xi_n^{R_n}(r))^2}\,dx-\frac{a\,n^3\,R_n^3}{2}\int_{\xi_n^{R_n}\ge f^{R_n}}{|\xi|^4\,dx}$$
$$
\qquad\qquad\qquad+R_n\,\int_{\xi_n^{R_n}\le f^{R_n}}\frac{\left(f'\left(r\right)\right)^2}{1-f^2\left(r\right)}\,dx-\frac{a\,R_n^3}{2}\,\int_{\xi_n^{R_n}\le f^{R_n}}f(x)^4\,dx $$
$$
\quad\le -\frac{a\,n^3\,R_n^3}{2}\int_{\mathbb R^3}{|\xi|^4\,dx}+R_n\,\int_{\mathbb R^3}\frac{\left(f'\left(r\right)\right)^2}{1-f^2\left(r\right)}\,dx-\frac{a\,R_n^3}{2}\,\int_{\mathbb R^3}f(x)^4\,dx+o(n^3)\,,
$$
because whenever $\xi_n^{R_n}\ge f^{R_n}$, $\left(\xi_n^{R_n}\right)^2>1$
and because the sequence $\{R_n\}_n$ is bounded. This clearly shows that $\mathcal F$ is unbounded from below.

\subsection*{Acknowledgment}
This work was partially supported by the Grant ANR-10-BLAN 0101 (NONAP) of the French Ministry of Research. The authors would like to thank \'Eric S\'er\'e for useful comments and for the proof of Lemma \ref{lempropX}. They also thank the Isaac Newton Institute, where this paper was finalized.

\bibliographystyle{article_simona}
\bibliography{Nonrelativistic_limit}

\end{document}